\documentclass[11pt,letterpaper,reqno]{amsart}
\usepackage[utf8]{inputenc}
\usepackage[finnish,english]{babel}
\usepackage{amsmath,amsfonts,amssymb, amsthm, setspace, mathtools, hyperref, mathrsfs}
\usepackage{fullpage}
\usepackage{xcolor}
\usepackage{extarrows}
\usepackage{tikz}
\usetikzlibrary{babel}
\usepackage{enumerate}
\usepackage{tikz-cd}
\mathtoolsset{showonlyrefs}
\usepackage{graphicx} 
\usepackage{comment}

\usepackage{caption, subcaption, overpic}
\usepackage{listings}

\usepackage[
giveninits=true,
maxbibnames=99,
backend=bibtex,
style=alphabetic,
doi=false,
url=false
]{biblatex}
\renewbibmacro{in:}{%
  \ifentrytype{article}
    {}
    {\printtext{\bibstring{in}\intitlepunct}}}
\bibliography{references}

\theoremstyle{plain}
\newtheorem{theorem}{Theorem}[section]
\newtheorem{proposition}[theorem]{Proposition}
\newtheorem{lemma}[theorem]{Lemma}
\newtheorem{corollary}[theorem]{Corollary}

\newtheorem{remark}[theorem]{Remark}
\newtheorem{example}[theorem]{Example}

\title{Generic Recovery of Permittivity and Permeability in Anisotropic Maxwell Systems}
\author{ Antonio Cocan, Maarten V. de Hoop, Joonas Ilmavirta, \\ Matti Lassas, and Anthony Várilly-Alvarado}
\address{Department of Mathematics and Statistics, University of Jyv\"askyl\"a, P.O. Box 35, FI-40014 University of Jyv\"askyl\"a, Finland}
\email{antonio.r.popgorea@jyu.fi}
\address{Simons Chair in Computational and Applied Mathematics and Earth Science, Rice University, 6100 S.\ Main St., Houston, TX 77005, USA}
\email{mvd2@rice.edu}
\address{Department of Mathematics and Statistics, University of Jyv\"askyl\"a, P.O. Box 35, FI-40014 University of Jyv\"askyl\"a, Finland}
\email{joonas.ilmavirta@jyu.fi}
\address{Department of Mathematics and Statistics, University of Helsinki, P.O. Box 68 (Gustaf H\"allstr\"omin katu 2B) FI-00014, University of Helsinki, Finland}
\email{Matti.Lassas@helsinki.fi}
\address{Department of Mathematics MS 136, Rice University, 6100 S.\ Main St., Houston, TX 77005, USA}
\email{av15@rice.edu}

\makeatletter
\@namedef{subjclassname@2020}{\textup{2020} Mathematics Subject Classification}
\makeatother

\subjclass[2020]{Primary 78A46; Secondary 14L24, 14E05, 35Q61.}
\date{\today}

\newcommand{\R}{\mathbb{R}}
\newcommand{\PP}{\mathbb{P}}
\newcommand{\C}{\mathbb{C}}

\newcommand{\Z}{\mathbb{Z}}
\newcommand{\V}{\mathbb{V}}

\newcommand{\eps}{\varepsilon}

\newcommand{\hideqed}{\renewcommand{\qed}{}} 

\newcommand{\dd}{\mathrm{d}}

\newcommand{\sslash}{\mathbin{/\mkern-6mu/}}

\DeclareMathOperator{\spec}{Spec}

\DeclareMathOperator{\irr}{Irr}

\DeclareMathOperator{\im}{im}

\DeclareMathOperator{\Sym}{Sym}
\DeclareMathOperator{\ev}{ev}
\DeclareMathOperator{\diag}{diag}

\begin{document}

\begin{abstract} 
We study the inverse problem of recovering the constitutive tensors of a homogeneous anisotropic electromagnetic medium without magnetoelectric coupling (non-chiral) from its Fresnel surface, the characteristic variety of Maxwell's equations governing electromagnetic wave propagation.
For known isotropic permeability, normalized to $\mu = I$, we prove that the Fresnel surface uniquely determines the permittivity tensor $\eps$, and that the associated Fresnel polynomial is reducible precisely when $\eps$ has a repeated eigenvalue.
For general, positive-definite symmetric tensors $(\eps,\mu)$, we prove that the Fresnel polynomial is generically irreducible over $\C$ and we identify the natural gauge symmetry under which it is invariant. 
Using geometric invariant theory, a powerful tool of modern algebraic geometry, we construct an affine quotient of the parameter space by this gauge action and prove that the induced Fresnel-polynomial map is birational onto its image. 
We deduce that, outside a proper real algebraic exceptional set, the real Fresnel surface determines $(\eps,\mu)$ up to gauge. 
This establishes generic uniqueness for the inverse problem and, to our knowledge, provides a new application of affine geometric invariant theory to gauge freedom in a PDE inverse problem.
\end{abstract}

\maketitle

\section{Introduction}
\label{sec:introduction}

An electromagnetic medium is described by its permittivity $\eps$ and its permeability $\mu$, both positive definite and symmetric $3\times3$ real matrices.
These material parameters define a Fresnel polynomial whose vanishing set is the Fresnel surface.
This surface describes the propagation of electromagnetic waves through the medium, so travel-time measurements provide information about it, leading to the inverse problem at the heart of this paper:
Are the material parameters $(\eps,\mu)$ uniquely determined by the Fresnel surface? We answer this question affirmatively, up to a natural gauge freedom that disappears, for example, when a material is magnetically isotropic, i.e., when $\mu=\mu_0I$ is a known scalar matrix. 

We use ideas from algebraic geometry to solve this inverse problem. 
For magnetically isotropic materials, the methods we use are elementary, but in the general case, we leverage more sophisticated tools from modern algebraic geometry, including geometric invariant theory.
To the best of our knowledge, this is the first use of modern geometric invariant theory to account for gauge freedom in PDE inverse problems.
However, the connections between this inverse problem and classical algebraic geometry have deep historical roots, which we review in \S\ref{sec:related-results-algebra}.

It is natural to ask for the further determination of permittivity and permeability \emph{fields} $\eps(x)$ and $\mu(x)$, i.e., for an understanding of how the parameters $(\eps,\mu)$ vary across physical space in an inhomogeneous material. 
Geometrically, this amounts to studying a Fresnel-surface bundle, whose fibres are individual Fresnel surfaces. 
To understand this global object, however, it seems prudent first to study the fibres themselves and their determination from pointwise data. 
Such a study is the objective of this paper.

The ``fibrewise study of electromagnetic geometry'' is analogous to a recent investigation into slowness surfaces arising from anisotropic elasticity~\cite{tonyjoonas2023}.
Structural similarities are unmistakable: material parameters are to be determined from the characteristic variety of a system of hyperbolic partial differential equations arising from a well-known physical model.
The algebraic structures one ends up with are, however, substantially different.

\subsection{Main Results}
\label{sec:main-results}

The material parameters $\eps$ and $\mu$ are collected into a single $6\times6$ block matrix
\begin{equation}
    \label{eq:M-block-matrix}
    M
    =
    \begin{pmatrix}
        \eps&0\\
        0&\mu
    \end{pmatrix}
    .
\end{equation}
In more exotic media, the off-diagonal entries may also differ from zero; i.e., we may have two more $3 \times 3$ matrices $\eta$ and $\xi$ in the off-diagonal blocks, satisfying $\xi = -\eta^T$, which correspond to magnetoelectric coupling, but we shall restrict our study to the block-diagonal case. 
Physically, this corresponds to the absence of chirality in our medium.

For $p\in\R^3$, the linear endomorphism of $\R^3$ given by $v\mapsto p\times v$ yields a matrix
\begin{equation}
    \label{eq:ptimes_matrix}
    {p\times}
    =
    \begin{pmatrix}
        0 & -p_3 & p_2 \\
        p_3 & 0 & -p_1 \\
        -p_2 & p_1 & 0
    \end{pmatrix}
\end{equation}
under the standard basis of $\R^3$. 
Using the $6\times6$ block matrix
\begin{equation}
    \label{eq:Cp_defn}
    C_p
    =
    \begin{pmatrix}
        0 & -p\times \\
        p\times & 0
    \end{pmatrix},
\end{equation}
we define the Fresnel polynomial by
\begin{equation}
    \label{eq:fresnel-polynomial-definition}
    P_{\eps,\mu}(p)
    =
    \det(C_p-M)
\end{equation}
and the Fresnel surface as its zero-locus
\begin{equation}
    \Sigma_{\eps,\mu}
    =
    \V(P_{\eps,\mu})
    :=
    \{
    p\in\R^3;
    P_{\eps,\mu}(p)=0
    \}
    .
\end{equation}

Our first two results together state that in a magnetically isotropic ($\mu=I$) and non-chiral medium, the permittivity~$\eps$ is uniquely determined by the Fresnel surface.

\begin{theorem}
    \label{thm:reducibility-characterization}
    The Fresnel polynomial $P_{\eps,I}$ is reducible if and only if the permittivity $\eps$ has a repeated eigenvalue.
\end{theorem}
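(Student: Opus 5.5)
We first reduce to diagonal $\eps$ and compute $P_{\eps,I}$ explicitly. Then the repeated-eigenvalue case gives an explicit factor, and for distinct eigenvalues we rule out every factorization by comparing homogeneous components. For $R\in SO(3)$ we have $(Rp)\times = R\,(p\times)\,R^T$. Conjugating $C_p-M$ by $\diag(R,R)$ therefore gives $P_{\eps,I}(R^Tp)=P_{R\eps R^T,I}(p)$. A linear change of variables preserves (ir)reducibility, so we may assume $\eps=\diag(a_1,a_2,a_3)$ with $a_i>0$.

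A Schur complement with respect to the lower-right block $-I$ gives
\[
P_{\eps,I}(p) = -\det\bigl(\eps + (p\times)^2\bigr) = -\det\bigl(\eps - |p|^2 I + pp^T\bigr),
\]
using $(p\times)^2 = pp^T - |p|^2 I$. Write $r=|p|^2$ and apply the matrix determinant lemma to $\diag(a_i-r)+pp^T$. Up to sign this gives
\[
P = \prod_i (a_i - r) + \sum_i p_i^2 \prod_{j\neq i}(a_j-r).
\]
Expanding, the degree-$6$ terms cancel, and we get
\[
-P = r\,q - \sum_i a_i(a_j+a_k)p_i^2 + a_1a_2a_3 ,
\]
where $q=\sum_i a_ip_i^2$ and $\{i,j,k\}=\{1,2,3\}$. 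This is routine but should be checked carefully.

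\emph{Repeated eigenvalue.} Suppose $a_1=a_2=a$. Then every term of the unexpanded formula contains the factor $(a-r)$. For the $i=1$ and $i=2$ terms this is because the factor $(a_2-r)$, respectively $(a_1-r)$, equals $(a-r)$. Hence $P=(a-|p|^2)\,Q$ with $Q$ a quadric, so $P$ is reducible. This is the familiar uniaxial sphere-times-ellipsoid splitting.

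\emph{Distinct eigenvalues.} We prove irreducibility over $\C$, which is stronger. Suppose $P=FG$ is a nontrivial factorization. The top-degree forms multiply to the quartic form $rq$. Both $r$ and $q$ are nondegenerate ternary quadratic forms, hence irreducible, and they are not proportional because the $a_i$ are not all equal. So $rq$ has no linear factor. This rules out a (linear)$\times$(cubic) split, and in the (quadric)$\times$(quadric) split the leading forms must be $cr$ and $c^{-1}q$. Write
\[
F = cr + L_1 + c_0, \qquad G = c^{-1}q + L_2 + d_0,
\]
with $L_1,L_2$ linear. Since $P$ has no cubic part, $c\,r L_2 + c^{-1} q L_1 = 0$. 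Because $r$ and $q$ are coprime, $r\mid L_1$, which forces $L_1=0$ and then $L_2=0$. Comparing quadratic parts then gives, with $\alpha = cd_0$ and $\beta=c_0/c$, the condition $\alpha + \beta a_i = -a_i(a_j+a_k)$ for each $i$. Equivalently, $a_ja_k - \beta a_i$ is independent of $i$. Subtracting the equations for $i=1,2$ gives $(a_2-a_1)(a_3+\beta)=0$, so $\beta=-a_3$. The pair $i=2,3$ similarly gives $\beta=-a_1$. This contradicts $a_1\neq a_3$.

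I expect the main obstacle to be the top-degree step. It needs a clean argument that $rq$ has no linear factor and that a quadric-quadric split must pair $r$ with $q$, which uses unique factorization in $\C[p_1,p_2,p_3]$ and the nondegeneracy of both conics. The remaining steps are coefficient comparisons.
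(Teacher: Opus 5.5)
Your proof is correct and follows essentially the same route as the paper. You diagonalize, force the leading forms of a quadric--quadric split to be $|p|^2$ and $q$, use the vanishing cubic part to kill the linear terms, and read off $(a_i-a_j)(\beta+a_k)=0$ from the quadratic part, with the explicit uniaxial factorization for the converse. The only real difference is how degree $1+3$ splittings are excluded: you work over $\C$ throughout and rule out a linear factor directly from unique factorization of the leading form $|p|^2q$, whereas the paper first invokes Lemma~\ref{lem:fresnel_factorization_quadratics} (compactness and real-rootedness along rays) to reduce to a product of two real quadrics, so your version is slightly more self-contained.

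One harmless slip: the Schur complement actually gives $P_{\eps,I}(p)=+\det\bigl(\eps+(p\times)^2\bigr)$, consistent with $P_{\eps,I}(0)=\det\eps$. Your expanded formula is therefore $P_{\eps,I}$ itself rather than $-P_{\eps,I}$, which does not affect reducibility.
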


\begin{theorem}
    \label{thm:eps-uniqueness}
    Let $\eps_1$ and $\eps_2$ be positive definite, symmetric real $3\times3$ matrices. 
    If $\Sigma_{\eps_1,I}=\Sigma_{\eps_2,I}$, then $\eps_1=\eps_2$.
\end{theorem}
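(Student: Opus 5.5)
We do not need Theorem~\ref{thm:reducibility-characterization} here. Instead, we recover $\eps$ by restricting the Fresnel polynomial to rays through the origin.

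\textbf{Computing $P_{\eps,I}$.} We use the Schur complement with respect to the lower right block $-I$ of $C_p-M$:
\[
P_{\eps,I}(p)=\det(-I)\det\bigl(-\eps-(p\times)^2\bigr).
\]
Since $(p\times)^2=pp^T-|p|^2I$, this gives
\[
P_{\eps,I}(p)=-\det\bigl(|p|^2I-pp^T-\eps\bigr).
\]
Write $p=t\omega$ with $|\omega|=1$ and $t\in\R$. Expanding the determinant, the $t^6$ terms cancel because $I-\omega\omega^T$ is singular. What remains is
\[
P_{\eps,I}(t\omega)=\pm\bigl(A(\omega)t^4-B(\omega)t^2+\det\eps\bigr),
\]
where $A(\omega)=\omega^T\eps\omega$ and $B$ is some quadratic form in $\omega$. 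This is the classical Fresnel quartic. To confirm the formula for $A$, diagonalize $\eps$ by a rotation, using $P_{R\eps R^T,I}(Rp)=P_{\eps,I}(p)$.

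\textbf{Each ray meets $\Sigma_{\eps,I}$ in two radii.} Fix $\omega$. The condition $P_{\eps,I}(t\omega)=0$ says that $\det(t^2(I-\omega\omega^T)-\eps)=0$. Suppose $v$ is a nonzero kernel vector. Then $\eps v=t^2(I-\omega\omega^T)v\in\omega^\perp$, so $u:=\eps v$ is a nonzero vector in $\omega^\perp$. Applying $I-\omega\omega^T$ to $v=\eps^{-1}u$ gives $\Pi\eps^{-1}\Pi u=t^{-2}u$, where $\Pi$ is the orthogonal projection onto $\omega^\perp$.

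The compression $\Pi\eps^{-1}\Pi|_{\omega^\perp}$ is a symmetric positive definite $2\times2$ operator. Hence the quadratic $A s^2-Bs+\det\eps$ in $s=t^2$ has two real positive roots $s_1(\omega),s_2(\omega)$, counted with multiplicity. These are the reciprocals of the eigenvalues of that compression, which gives an identification independent of $A$ and $B$. The points of $\Sigma_{\eps,I}$ on the line $\R\omega$ are exactly $\pm\sqrt{s_i(\omega)}\,\omega$.

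Thus the real surface determines the set $\{s_1(\omega),s_2(\omega)\}$ for every $\omega$. Where the two roots coincide (optic axes), we lose multiplicity information. Those directions form a proper closed set, however, so on a dense open set of directions we read off the product
\[
s_1s_2=\frac{\det\eps}{\omega^T\eps\omega}.
\]
By continuity this identity then holds for all $\omega$.

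\textbf{Recovering $\eps$.} Suppose $\Sigma_{\eps_1,I}=\Sigma_{\eps_2,I}$. Then
\[
\frac{\det\eps_1}{\omega^T\eps_1\omega}=\frac{\det\eps_2}{\omega^T\eps_2\omega}\quad\text{for all unit }\omega.
\]
Two quadratic forms agreeing up to a constant factor on the sphere are proportional, so $\eps_2=\lambda\eps_1$ for some $\lambda>0$. Substituting back gives
\[
\frac{\lambda^3\det\eps_1}{\lambda\,\omega^T\eps_1\omega}=\frac{\det\eps_1}{\omega^T\eps_1\omega},
\]
so $\lambda^2=1$ and hence $\lambda=1$.

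The step needing most care is the second: ensuring that every ray actually meets the real surface at the roots of the quadratic. This is handled by the positive-definite compression argument, which is where positivity of $\eps$ is essential. A secondary point is the continuity argument at optic-axis directions, where $s_1=s_2$.
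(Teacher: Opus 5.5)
Your proof is correct in substance and follows a genuinely different route from the paper, but one claim is false and should be removed. The directions where $s_1(\omega)=s_2(\omega)$ do not always form a proper closed subset of the sphere. For $\eps=cI$ the compression $\Pi\eps^{-1}\Pi|_{\omega^\perp}$ is $c^{-1}$ times the identity for every $\omega$. So every direction is an optic axis, your dense open set is empty, and the density-plus-continuity step yields nothing: as written, the argument does not cover the case where $\Sigma_{\eps_1,I}$ is a sphere.

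The step is also unnecessary. The roots form a multiset of size two, so if the set read off from $\Sigma_{\eps,I}\cap\R\omega$ is a singleton $\{s\}$, then $s_1=s_2=s$. Hence $s_1(\omega)s_2(\omega)$ is determined by the surface for every $\omega$, and Vieta gives $s_1s_2=\det\eps/\omega^T\eps\omega$ identically, with no continuity argument. A smaller point: to conclude that both roots, counted with multiplicity, are real and positive, run your kernel-vector argument with a possibly complex root $s$ in place of $t^2$, since as written you only treat real $t$. Alternatively, use $\det(\eps-s\Pi)=\det\eps\,\det(I-s\,\Pi\eps^{-1}\Pi)$, which also gives the multiplicities.

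The paper instead works with the radial set $R(\Sigma_{\eps,I})$, consisting of singular points and points where the normal is parallel to $p$. After diagonalizing $\eps$, it computes the $2\times2$ minors of the matrix with rows $p$ and $\nabla P_{\eps,I}(p)$. Lemmas~\ref{lma:eps-3-eigenvalues}, \ref{lma:eps-2-eigenvalues} and~\ref{lma:eps-1-eigenvalue} then show that this set is three circles, a sphere and a circle, or a single sphere, according to the eigenvalue multiplicities. Eigenvalues and eigenspaces are read off case by case.

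You restrict to lines through the origin and use one invariant, $\omega\mapsto s_1(\omega)s_2(\omega)$, which is the reciprocal of the quadratic form of $\eps/\det\eps$ on the unit sphere. This avoids differentiation and the case split, treats every spectrum uniformly, and gives an explicit reconstruction. If $N$ is the symmetric matrix of the quadratic form equal to $1/(s_1s_2)$ on the unit sphere, then $N=\eps/\det\eps$ and $\eps=(\det N)^{-1/2}N$. What the paper's route buys in exchange is a direct link between $\eps$ and visible geometric features of the surface: circles in the planes orthogonal to the principal axes, with squared radii equal to the eigenvalues, and a spherical component exactly when an eigenvalue repeats. This dovetails with the reducibility criterion of Theorem~\ref{thm:reducibility-characterization}.
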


From the conjunction of these two theorems, we deduce that a small Euclidean subset of the Fresnel surface suffices to uniquely determine permittivity.

\begin{corollary}
    \label{thm:eps-uniqueness-subset}
    A nonempty open set, in the usual Euclidean relative topology, of the smooth locus of the Fresnel surface $\Sigma_{\eps,I}\subset\R^3$ uniquely determines the permittivity $\eps$, as long as $\eps$ has no repeated eigenvalues.
\end{corollary}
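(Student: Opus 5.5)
Given an open subset $U$ of the smooth locus of $\Sigma_{\eps,I}$, we show that $U$ determines the real Fresnel surface $\Sigma_{\eps,I}$, and then apply Theorem~\ref{thm:eps-uniqueness}. The bridge is irreducibility: since $\eps$ has no repeated eigenvalues, Theorem~\ref{thm:reducibility-characterization} says $P_{\eps,I}$ is irreducible over the field of the theorem, which we take to be $\C$ (if it is stated over $\R$, we first check that $P_{\eps,I}$ has no conjugate-pair factorization).

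\textbf{Step 1: $U$ is Zariski dense in $\V_\C(P_{\eps,I})$.} Take $p_0\in U$. It is a smooth point of the real surface, so after checking that $\nabla P_{\eps,I}(p_0)\neq0$, the implicit function theorem gives a local real-analytic parametrization of $\Sigma_{\eps,I}$ near $p_0$ by an open set of $\R^2$.

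Let $Q$ be any polynomial vanishing on $U$. Then $Q$ vanishes on a two-dimensional real-analytic piece of the complex hypersurface $\V_\C(P_{\eps,I})$ near its smooth point $p_0$. That piece is a totally real submanifold of maximal dimension in the two-dimensional complex manifold germ. By the identity theorem, $Q$ vanishes on the whole germ of $\V_\C(P_{\eps,I})$ at $p_0$.

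The germ is an open subset of the irreducible variety $\V_\C(P_{\eps,I})$, so $Q$ vanishes on all of it. By the Nullstellensatz, irreducibility of $P_{\eps,I}$ then gives $P_{\eps,I}\mid Q$. Hence the ideal of polynomials vanishing on $U$ is exactly $(P_{\eps,I})$.

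\textbf{Step 2: recover $\eps$.} Step~1 recovers $P_{\eps,I}$ up to a nonzero scalar, as the generator of minimal degree of that ideal. To conclude, we need to rule out that some $\eps_2$ satisfies $U\subset\Sigma_{\eps_2,I}$ while $\Sigma_{\eps_2,I}\ne\Sigma_{\eps,I}$.

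If $U\subset\Sigma_{\eps_2,I}$, then $P_{\eps,I}\mid P_{\eps_2,I}$. Both polynomials have degree $4$ in $p$ after removing the trivial factor, and both have the same normalized leading and constant behaviour. We should verify this from the expansion $\det(C_p-M)$: the constant term is $\det\eps$, which is up to sign what it reduces to at $p=0$. Since the degrees match, the quotient is a constant, and the constant term fixes it. So $P_{\eps_2,I}=cP_{\eps,I}$ and the real zero sets coincide. Theorem~\ref{thm:eps-uniqueness} then gives $\eps_2=\eps$.

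Alternatively, we can avoid degree bookkeeping by noting that $\Sigma_{\eps,I}=\Sigma_{\eps_2,I}$ follows directly from the equality of the polynomials up to a scalar.

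\textbf{Main obstacle.} The delicate step is Step~1's passage from a real open piece to Zariski density. It requires two checks:
\begin{itemize}
\item The real smooth locus must be understood so that the real gradient is nonzero there. A real singular point could look like a smooth surface point, for example when the polynomial vanishes to order two along the surface.
\item The identity-theorem argument must be applied on a smooth point of the complex hypersurface.
\end{itemize}
We would define the smooth locus as the set of points with nonvanishing gradient, which makes both checks immediate. Irreducibility from Theorem~\ref{thm:reducibility-characterization} is exactly what rules out $U$ lying on only one component.
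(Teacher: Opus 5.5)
Your proposal is correct and follows essentially the same route as the paper. Theorem~\ref{thm:reducibility-characterization} gives irreducibility of $P_{\eps,I}$ over $\C$, a Euclidean open piece of the smooth real locus is Zariski dense in $\V(P_{\eps,I})_\C$, and Theorem~\ref{thm:eps-uniqueness} finishes. Your divisibility-and-degree step, which gives $P_{\eps_2,I}=cP_{\eps,I}$ and hence $\Sigma_{\eps_2,I}=\Sigma_{\eps,I}$ rather than mere containment, usefully spells out a point the paper's terse proof leaves implicit.
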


Consequently, kinematic measurements of a medium providing access to an open subset of the Fresnel surface generically suffice to uniquely determine an unknown $\eps$.

Proceeding to non-magnetically isotropic materials, we give a new proof of an analog of Theorem~\ref{thm:reducibility-characterization}.

\begin{theorem}[Generic Irreducibility]
    \label{thm:generic-irreducibility}
    The Fresnel polynomial $P_{\eps,\mu}$ associated to generic material parameters $(\eps,\mu)$ is irreducible over $\C$.
\end{theorem}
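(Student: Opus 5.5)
The set of parameters with reducible Fresnel polynomial is Zariski closed in the complexified parameter space, so it is enough to exhibit a single pair $(\eps,\mu)$ for which $P_{\eps,\mu}$ is irreducible over $\C$. Concretely, I would work in $V=\Sym^2(\C^3)\times\Sym^2(\C^3)\cong\C^{12}$, which is irreducible. Real positive-definite pairs form a nonempty Euclidean-open subset of the real points $V(\R)$, so they are Zariski dense in $V$. Hence ``generic'' may be read as ``off a proper Zariski-closed subset of $V$''.

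\textbf{Step 1 (structure of $P$).} First, $C_p$ has rank at most $4$ and is linear in $p$. Writing $M=\begin{pmatrix}\eps&0\\0&\mu\end{pmatrix}$, one checks $\det(C_{-p}-M)=\det(C_p-M)$ by conjugating with $\diag(I,-I)$. So $P_{\eps,\mu}$ is even in $p$ of degree at most $4$:
\[
P_{\eps,\mu}=Q_4+Q_2+Q_0,\qquad Q_0=\det\eps\det\mu .
\]
I would compute $Q_4$ explicitly; I expect $Q_4=\det(\cdot)\,(p\cdot\eps p)(p\cdot\mu p)$ up to normalization. Homogenizing with a variable $p_0$ gives a quartic form $\tilde P_{\eps,\mu}(p_0,p)$ whose coefficients are polynomial in $(\eps,\mu)$. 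Wherever $Q_0\neq0$ and $Q_4\not\equiv0$, the polynomial $P$ is irreducible if and only if $\tilde P$ is irreducible.

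\textbf{Step 2 (closedness).} For forms of degree $4$ in $4$ variables, the reducible ones form the image of the multiplication maps
\[
\PP(S^1)\times\PP(S^3)\to\PP(S^4),\qquad \PP(S^2)\times\PP(S^2)\to\PP(S^4).
\]
These maps are proper, so their images are closed. Pulling back along the polynomial map $(\eps,\mu)\mapsto\tilde P_{\eps,\mu}$, and adjoining the closed conditions $Q_0=0$ and $Q_4=0$, gives a Zariski-closed reducibility locus $R\subset V$. Since $V$ is irreducible, it remains to show $R\ne V$.

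\textbf{Step 3 (one irreducible example).} Take $\mu=I$ and $\eps=\diag(a,b,c)$ with distinct $a,b,c$. If Theorem~\ref{thm:reducibility-characterization} is read over $\C$, this already finishes the proof. To give an independent argument, I would use singularities. If a quartic surface $\tilde P=FG$ in $\PP^3_\C$ has nonconstant factors, then $\V(F)\cap\V(G)$ is a curve, and every point of it is singular on $\V(\tilde P)$. So it suffices to show that $\V(\tilde P)$ has only finitely many singular points.

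For the diagonal example the Fresnel quartic is explicit, and its singular points can be computed by solving $\nabla\tilde P=0$. Classically these are the four real conical points, their complex conjugate companions, and finitely many points at infinity.

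\textbf{Main obstacle.} The difficulty is the finiteness of the singular locus in Step 3, including the points on the plane at infinity $p_0=0$, where $\tilde P$ restricts to $Q_4$. The quartic part is essentially $(p\cdot\eps p)(p\cdot p)$, which is itself reducible, so at infinity I would check directly that $\nabla\tilde P$ (including $\partial_{p_0}\tilde P$, which involves $Q_2$) vanishes only at the finitely many intersection points of the two conics $p\cdot\eps p=0$ and $p\cdot p=0$. If this check fails, the fallback is to test the two factorization types directly. For a linear factor, restrict to a generic line and count roots. For a quadric times quadric, compare $Q_4$, $Q_2$ and $Q_0$: any factorization must split $Q_4=(p\cdot\eps p)(p\cdot p)$ as a product of these two conics, and then matching the $Q_2$ term forces $a,b,c$ not to be distinct.
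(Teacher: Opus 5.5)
Your proposal is correct. Step 2 is exactly the paper's openness argument in Theorem~\ref{thm:irr_locus}: the reducible quartics form the closed union of the images of $m_{1,3}$ and $m_{2,2}$, and this is pulled back along the evaluation map on the locus $\det M\neq0$.

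The real difference is the witness for nonemptiness.
\begin{itemize}
\item The paper uses a point with non-diagonal $\eps$ and non-scalar $\mu$ (Example~\ref{ex:irreducible_fresnel_polynomial}). It certifies irreducibility by a \textsc{magma} check over $\mathbb{F}_{5^4}$, together with a reduction-mod-$p$ lemma from \cite{tonyjoonas2023}.
\item You use $\mu=I$ and $\eps=\diag(a,b,c)$, which needs no computer. The first option in your Step 3 already closes the proof. Theorem~\ref{thm:reducibility-characterization} is proved through Lemma~\ref{lem:fresnel_factorization_quadratics}, which starts from a factorization over $\C$, so it is a statement about irreducibility over $\C$. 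You should take $a,b,c>0$, because that lemma uses positive definiteness.
\end{itemize}
The paper's remark that Theorem~\ref{thm:reducibility-characterization} does not follow from Theorem~\ref{thm:generic-irreducibility} concerns the opposite implication. Your direction is legitimate, since a single point of the irreducible parameter space suffices. The paper's route is independent of the isotropic analysis; yours avoids computation entirely.

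Your independent singularity argument also works, but it contains one slip. Since $\widetilde P$ is even in $p_0$, $\partial_{p_0}\widetilde P$ vanishes identically on $p_0=0$, so it does not bring in $Q_2$ there. The singular points at infinity are simply those of the plane quartic $sq=0$, where $s=|p|^2$ and $q=ap_1^2+bp_2^2+cp_3^2$. These are the four points of $s=q=0$, so finiteness holds anyway.

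In the affine part, $\partial_{p_1}P=2p_1\bigl(q+as-a(b+c)\bigr)$ and its cyclic analogues force every singular point into a coordinate plane, with four per plane. Together with the points at infinity these are the classical $16$ nodes: $4$ real, $8$ non-real affine, and $4$ at infinity. Your description of them as real points plus their conjugate companions is not accurate.

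In your fallback, the $1+3$ case is excluded at once. The top-degree part of a linear factor would have to divide $sq$, which is a product of two irreducible conics over $\C$.
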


We emphasize that Theorem~\ref{thm:reducibility-characterization} is not a consequence of Theorem~\ref{thm:generic-irreducibility}. 
Indeed, the set of possibly reducible Fresnel polynomials in the proof of Theorem~\ref{thm:generic-irreducibility} could have included all Fresnel polynomials with $\mu = I$ for dimension reasons. 

Theorem~\ref{thm:generic-irreducibility} is implicit in the classical theory of Fresnel wave surfaces due to their tight connection to Kummer surfaces, which are known to be irreducible; see, e.g., Bateman~\cite{bateman1910kummer} and Baekler–Favaro–Itin–Hehl~\cite{Baekler2014}. 
However, not every Fresnel surface is a Kummer surface; this is only true under a genericity assumption. 
For instance, the Fresnel polynomial with $\eps = \mu = I$ is reducible. 

The proof of Theorem~\ref{thm:generic-irreducibility} also uses algebraic geometry, but it is remarkably simple. \\

The matrix $C_p$ depends linearly on $p$ but has only rank 4, because the operator $p\times$ has rank~$2$. 
The Fresnel polynomial $P_{\eps,\mu}\in\R[p_1,p_2,p_3]$ has degree $4$; it is supported on $22$ monomials, all of which have even degree, and its constant term is $\det M = \det(\eps)\det(\mu)$. Fixing a monomial ordering, we identify $P_{\eps,\mu}\in\R[p_1,p_2,p_3]$ with its vector of coefficients, giving a point $P_{\eps,\mu}\in\R^{22}$.
We denote the map from the material parameters $(\eps,\mu)\in\R^{6+6}$ to the Fresnel polynomial by $F\colon\R^{12}\to\R^{22}$.

In Proposition~\ref{prop:fresnel_gauge_freedom} below, we show that
\begin{equation}
    \label{eq:gauge}
    P_{\eps,\mu}
    =
    P_{\mu,\eps}
    =
    P_{\lambda\eps,\lambda^{-1}\mu}
\end{equation}
for all $\eps,\mu\in\R^6$ (symmetric real $3\times3$ matrices) and all $\lambda\in\R\setminus\{0\}$. 
These conditions define the gauge freedom for the Fresnel polynomial, and prompt us to define an equivalence relation $\sim$ on $\R^{12}$ by declaring that $(\eps,\mu)\sim(\eps',\mu')$ when $(\eps,\mu)=(\lambda\eps,\lambda^{-1}\mu)$ or $(\eps,\mu)=(\lambda\mu,\lambda^{-1}\eps)$ for some $\lambda \neq 0$. 
Since the map $F$ is constant on equivalence classes, it factors through the quotient $f\colon\R^{12}/{\sim}\to\R^{22}$.

Empirically, observed kinematic data determines the Fresnel surface, rather than a distinguished normalization of the Fresnel polynomial. 
Thus, a more natural target for the map $f$ is a \emph{projective} space of quartic polynomials, provided the polynomial is nonzero. 
However, there is an $\R$-algebraic subtlety here. 
Over $\C$, two square-free polynomials vanish in the same set exactly when they differ by a non-zero complex scalar. 
Over $\R$, equality of real zero sets does not imply that two square-free polynomials differ by a real scalar, since factors without real zeroes are invisible to the real vanishing locus. 
On the other hand, since a general Fresnel polynomial is irreducible over $\C$ (Theorem~\ref{thm:generic-irreducibility}), and its real zero locus contains smooth real points, any quartic polynomial vanishing on the same nonempty Euclidean open subset of the real Fresnel surface must be a scalar multiple of the Fresnel polynomial (see Lemma~\ref{lem:real-surface-determines-quartic}). 
Therefore, generically, passing from the Fresnel polynomial to the Fresnel surface amounts precisely to projectivizing the coefficient vector.
Write $q\colon\R^{22}\setminus\{0\}\to\R\PP^{21}$ for the quotient map to the projectivization. 

Denote by $\R^{12}_+$ the set of points $(\eps,\mu)$ where $\eps$ and $\mu$ are positive definite. 
The equivalence relation $\sim$ induces an action of $\R_{>0}\rtimes \Z/2\Z$ on $\R^{12}_+$. 
Writing $X:=\R^{12}_+/(\R_{>0}\rtimes \Z/2\Z)$,  $Y:=\R\PP^{21}$ and $\phi=q\circ f\big|_X$, the inverse problem of reconstructing $(\eps,\mu)$ from the Fresnel surface $\Sigma_{\eps,\mu}$ up to the gauge~\eqref{eq:gauge} equivalently asks whether the forward operator $\phi\colon X \to Y$ is injective. 
This is the subject of the central result of our investigation:

\begin{theorem}[Generic Unique Reconstruction]
    \label{thm:generic-uniqueness}
    The material parameters $(\eps,\mu)$ are generically uniquely determined by $\Sigma_{\eps,\mu}$ up to the gauge freedom~\eqref{eq:gauge}.
    More precisely, the map $\phi\colon X \to Y$ is generically injective.
\end{theorem}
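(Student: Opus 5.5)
The plan is to complexify and work algebraically. Let $V=\Sym^2(\C^3)\times\Sym^2(\C^3)\cong\C^{12}$ with the action of $G=\C^*\rtimes\Z/2\Z$ given by $\lambda\cdot(\eps,\mu)=(\lambda\eps,\lambda^{-1}\mu)$, with the involution swapping $\eps$ and $\mu$. This group is reductive, so the affine GIT quotient $V\sslash G=\spec \C[V]^G$ exists. The invariant ring is easy to describe. The $\C^*$-invariants are generated by the bilinear products $\eps_{ij}\mu_{kl}$, and taking $\Z/2$-invariants symmetrizes these. By Proposition~\ref{prop:fresnel_gauge_freedom} the map $F$ is $G$-invariant, so it descends to a morphism $\bar F\colon V\sslash G\to\C^{22}$. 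Since $F$ is homogeneous in $(\eps,\mu)$, composing with the projection to $\PP^{21}$ gives a rational map $\bar\phi$. The quotient $V\sslash G$ has dimension $11$. The first target is therefore: show that $\bar F$ is birational onto its image, equivalently that a general fibre of $F$ is exactly one $G$-orbit. Then the projective statement follows. The image $\bar F(V\sslash G)$ is not contained in any hyperplane through scaling ambiguities, because $F(\lambda\eps,\lambda\mu)=\lambda^{6}\cdot(\text{degree-}6\text{ homogeneous pieces})$. More precisely, the coefficients of $P$ are homogeneous in $(\eps,\mu)$ but of mixed degrees. Rescaling $(\eps,\mu)\mapsto(t\eps,t\mu)$ multiplies the degree-$2k$ part of $P$ by $t^{6-2k}$ after substituting $p\mapsto tp$. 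I would use this to check that projectivizing loses exactly the overall scalar, via the constant term $\det\eps\det\mu$.

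The key steps run in this order. (1) Compute $\dim$ of the image by showing that the differential of $F$ at one explicit point (e.g.\ diagonal $\eps,\mu$ with generic distinct entries) has rank $11$. This gives generic finiteness of $\bar F$. (2) Compute the degree of $\bar F$ onto its image, i.e.\ show that a general fibre of $F$ is a single $G$-orbit. For this I would pick a specific point $(\eps_0,\mu_0)$, say diagonal with generic rational entries, and determine all $(\eps,\mu)$ with $P_{\eps,\mu}=P_{\eps_0,\mu_0}$. One route is a Gröbner basis computation of the fibre, using the $22$ coefficient equations in $12$ unknowns. A more conceptual route uses the structure of $P$ at special points. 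The top-degree part of $P$ is a quartic form that should be $(p^T\eps p)(p^T\mu p)$ times a determinantal factor. Its factorization recovers the unordered pair of conics $\{\eps,\mu\}$ up to scalars, and the constant term then fixes the product of those scalars. Since the fibre dimension is upper semicontinuous and the number of components is generically constant on an open set, checking one point with reduced fibre equal to one orbit suffices, by generic smoothness in characteristic zero. (3) Conclude that $\bar F$ is birational onto its image, so it is injective on a dense Zariski open $U\subset V\sslash G$.

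(4) Descend to the reals. The complement of $U$ pulls back to a proper algebraic subset $Z$ of $V$ defined over $\R$. Because $\R^{12}_+$ is a nonempty Euclidean open set, it is not contained in $Z(\R)$. Off $Z$, two real positive points with the same Fresnel polynomial lie in one complex $G$-orbit. One then checks that the connecting $\lambda$ must be real and positive, using the constant term $\det\eps\det\mu>0$ and positive definiteness. We must also combine this with Theorem~\ref{thm:generic-irreducibility} and Lemma~\ref{lem:real-surface-determines-quartic}, enlarging the exceptional set by the reducible locus. Then equality of real surfaces forces the polynomials to be proportional, and the proportionality constant is absorbed by the gauge $\lambda$ together with the scaling behaviour above. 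This gives generic injectivity of $\phi\colon X\to Y$.

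The main obstacle is step (2), establishing degree one rather than just finiteness. If the top-degree-factorization idea fails, because the leading form does not split so cleanly, I would fall back on an explicit certified computation at a rational point. Failing that, I would reconstruct $(\eps,\mu)$ from invariants in $\C[V]^G$ as rational functions of the coefficients of $P$, which is exactly what birationality requires.
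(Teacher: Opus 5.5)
Your ``more conceptual route'' in step (2) is correct. Made precise, it is a complete proof that is genuinely different from the paper's and far more elementary. It also makes step (1), the GIT quotient, and the computational fallback unnecessary.

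\textbf{The factorization argument.} A Schur complement gives $P_{\eps,\mu}(p)=\det\mu\,\det(\eps+K)$ with $K=(p\times)\mu^{-1}(p\times)$. Since $\det K=0$ and $\operatorname{adj}(p\times)=pp^T$, we get $\operatorname{adj}K=(p^T\mu p)\,pp^T/\det\mu$. So the $p$-degree-four part is $\det\mu\,\operatorname{tr}(\eps\operatorname{adj}K)=(p^T\eps p)(p^T\mu p)$ exactly. There is no extra determinantal factor, and no room for one: these coefficients have bidegree $(1,1)$ in $(\eps,\mu)$.

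Now let $P_{\eps',\mu'}=cP_{\eps,\mu}$ with $c\in\R^\times$ and both pairs in $\mathcal P$.
\begin{enumerate}
\item Nondegenerate ternary quadrics are irreducible over $\C$. Unique factorization of the quartic parts then gives $(\eps',\mu')=(a\eps,b\mu)$ or $(a\mu,b\eps)$ with $ab=c$, and $a$ is real because the matrices are.
\item Comparing constant terms gives $(ab)^3=ab$, so $c^2=1$.
\item Both constant terms are positive, so $c=1$. Hence $b=a^{-1}$, and $a>0$ by positive definiteness.
\end{enumerate}
This works at every point, not just a special one. It shows $\phi\colon X\to Y$ is injective on all of $X$. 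Genericity enters only in passing from $\Sigma_{\eps,\mu}$ to the projective class of the polynomial, via Theorem~\ref{thm:generic-irreducibility} and Lemma~\ref{lem:real-surface-determines-quartic}.

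\textbf{Comparison with the paper.} The paper builds $\mathcal X^\circ=V^\circ\sslash G$, slices by $\eps_1=1$, compactifies and blows up the base locus, and certifies by computer one reduced length-two fibre of the proper map $h$. This yields birationality of $\phi$, hence only generic injectivity. What that machinery buys is independence from the special shape of the leading form, so it transfers to characteristic varieties where no such factorization exists.

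\textbf{Claims to discard.} Several other claims in your proposal are wrong and should not be relied on.

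(i) ``Checking one point with reduced fibre equal to one orbit suffices'' is false for the non-proper map on $V^\circ\sslash G$. Fibre degree is upper semicontinuous only for finite morphisms. Without properness, points of nearby fibres can escape to infinity or into $\{\det\eps\det\mu=0\}$. Compare $x\mapsto x^2$ on $\mathbb{A}^1\setminus\{1\}$: its fibre over $1$ is one reduced point, yet the degree is $2$. Generic smoothness does not repair this. It is exactly why the paper resolves $g|_\Pi$ on the compactified slice before computing a fibre, so your Gr\"obner fallback would not be a proof as stated.

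(ii) In step (4), a proportionality constant cannot be ``absorbed by the gauge''. The gauge leaves $P_{\eps,\mu}$ unchanged, and $(t\eps,t\mu)$ multiplies the parts of $p$-degree $4,2,0$ by $t^2,t^4,t^6$, never by a common scalar. For the same reason, birationality of the affine map $\bar F$ does not by itself give the projective statement. What is needed is that the constant equals $1$, which is precisely what the constant-term argument above proves.

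(iii) Lemma~\ref{lem:real-surface-determines-quartic} also requires a smooth real point, which you do not address. The paper secures it by imposing $\mathfrak d\neq0$. Alternatively, for positive definite data the real surface contains a topological sphere (the largest root along each ray), while the singular locus of a geometrically irreducible quartic surface has real points of dimension at most one.
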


Unfortunately, $X$ is only a semi-algebraic set, so generic injectivity in Theorem~\ref{thm:generic-uniqueness} is not a statement about Zariski open subsets. 
But we can be algebro-geometrically precise about what we mean as follows (see~\S\ref{sec:generic_reconstruction_proof}). 
Regard pairs of symmetric $3\times3$ matrices $(\eps,\mu)$ as points of the real affine space
\[
    \Sym_3(\R)\times\Sym_3(\R)\simeq\R^{12}.
\]
Then there exists a proper gauge-invariant real algebraic subset $\mathcal E \subset \R^{12}$ such that, whenever $(\eps,\mu)$ and $(\eps',\mu') \in \R^{12}_+\setminus \mathcal E$, if $\Sigma_{\eps,\mu}=\Sigma_{\eps',\mu'}$, then $(\eps,\mu)$ and $(\eps',\mu')$ differ by a gauge transformation~\eqref{eq:gauge}. 
Therefore, outside a proper real algebraic locus of $\R^{12}$, the Fresnel surface determines the material parameters up to the gauge freedom~\eqref{eq:gauge}.

\subsection{Paper Outline}
\label{sec:outline}

In \S\ref{sec:maxwell}, we review the form of Maxwell's equations and derive the Fresnel polynomial as the characteristic polynomial governing plane-wave propagation. 
We then place our results in the context of the classical algebraic geometry of Fresnel and Kummer surfaces and of inverse problems for Maxwell's equations in \S\ref{sec:related_work}. 

The thrust of the paper begins in \S\ref{sec:gauge_freedom}, where we establish the gauge symmetries of the Fresnel polynomial and formulate the inverse problem modulo these natural equivalences.

We proceed to study the magnetically isotropic case in~\S\ref{sec:magnetically_isotropic_case}, where we characterize the reducibility of the Fresnel polynomial in terms of the eigenvalue multiplicities of the permittivity tensor and prove that the Fresnel surface uniquely determines the permittivity (Theorems~\ref{thm:reducibility-characterization} and~\ref{thm:eps-uniqueness}).

General non-magnetoelectric media are treated in \S\S\ref{sec:nonmagnetoelectric_irreducibility}--\ref{sec:nonmagnetoelectric_generic_uniqueness}. 
In \S\ref{sec:nonmagnetoelectric_irreducibility}, we prove the generic irreducibility of the Fresnel polynomial (Theorem~\ref{thm:generic-irreducibility}), and we establish the generic uniqueness theorem (Theorem~\ref{thm:generic-uniqueness}) in \S\ref{sec:nonmagnetoelectric_generic_uniqueness}. 
We first construct the relevant algebraic quotient under the gauge action in~\S\ref{sec:algebraic_quotient}, then analyze the resulting rational map by passing to a suitable slice in~\S\ref{sec:slicing}, compactifying the parameter space, and resolving its indeterminacies in~\S\ref{sec:compactification_and_blow-up}. 
Theorem~\ref{thm:Fresnel_polynomial_map_birational} is the key algebro-geometric result that allows us to prove Theorem~\ref{thm:generic-uniqueness}. 
We conclude by relating projective classes of Fresnel polynomials to real Fresnel surfaces, thereby proving generic reconstruction of material parameters from the Fresnel surface up to the natural gauge freedom.

\subsection*{Use of AI}
After completing a draft of this paper, we asked OpenAI's GPT5.6-Sol Pro to audit the manuscript for the correctness of claims and arguments and logical holes. 
Its response helped to bolster some arguments that were essentially correct, but could have used more detail in \S\S\ref{sec:gauge_freedom}--\ref{sec:magnetically_isotropic_case}; it suggested Theorem~\ref{thm:irr_locus}, greatly simplifying our original proof of Theorem~\ref{thm:generic-irreducibility}, which followed along the methods in~\cite{tonyjoonas2023}; and it helped significantly improve the exposition in \S\ref{sec:nonmagnetoelectric_generic_uniqueness}. 
The AI also wrote \textsc{magma} code to perform further consistency checks on Examples~\ref{ex:irreducible_fresnel_polynomial} and~\ref{ex:special-fibre}, although our original code already established all the claims we needed. 
The paper remains human-written.

\subsection*{Acknowledgements}

A.\ Cocan and J.\ Ilmavirta were supported by the Research Council of Finland (Flagship of Advanced Mathematics for Sensing Imaging and Modelling grant 359208; Centre of Excellence of Inverse Modelling and Imaging grant 353092; and other grants 351665, 351656, 358047, 360434) and a Väisälä project grant by the Finnish Academy of Science and Letters. 
M.\ V.\ de Hoop was supported by the Simons Foundation under the MATH + X program, the National Science Foundation under grant DMS-1815143, and the corporate members of the Geo-Mathematical Imaging Group at Rice University. 
M.\ Lassas was partially supported by the ERC Advanced Grant project 101097198 (Inverse PDE) of the European Research Council and the FAME flagship of the Research Council of Finland (grant 359186).  
A.\ V\'arilly-Alvarado was supported by NSF grant DMS-2302231. 
The views and opinions expressed are those of the authors only and do not necessarily reflect those of the funding agencies or the EU.

Computer calculations for this paper were done using the \textsc{magma} computer algebra system~\cite{magma}.

\section{Review: Maxwell's equations}
\label{sec:maxwell}

We briefly recall the form of Maxwell's equations we use and explain the origin of the Fresnel polynomial. 
In our presentation, we assume the material parameters are homogeneous (independent of the point), non-dispersive (independent of frequency), and free of free charge (so that the current density vanishes).

In a general local linear medium, the electric field
$E(x,t)\colon \R^3\times \R \to \R^3$ and magnetic field
$H(x,t)\colon \R^3\times \R \to \R^3$ are related to the electric flux density
$D(x,t)\colon \R^3\times \R \to \R^3$ and the magnetic flux density
$B(x,t)\colon \R^3\times \R \to \R^3$ by the constitutive relations
\begin{equation}
    \label{eq:constitutive-general}
    \begin{pmatrix}
        D\\
        B
    \end{pmatrix}
    =
    M
    \begin{pmatrix}
        E\\
        H
    \end{pmatrix},
    \qquad\text{where }
    M=
    \begin{pmatrix}
        \eps & \xi\\
        \eta & \mu
    \end{pmatrix},
\end{equation}
and the $3\times3$ matrices $\eps,\mu,\eta,\xi$ are constant. 
Throughout most of the paper, we restrict attention to nonchiral media, for which $\eta=\xi=0$, so that $M$ takes the diagonal block form~\eqref{eq:M-block-matrix}, with $\eps$ and $\mu$ symmetric and positive definite. 
The fields $E$, $H$, $D$ and $B$ satisfy Maxwell's equations
\begin{equation}
    \label{eq:maxwell-time}
    \begin{aligned}
        \nabla\times E &= -\partial_t B,\\
        \nabla\times H &= \phantom{-}\partial_t D,\\
        \nabla\cdot D &= 0,\\
        \nabla\cdot B &= 0.
    \end{aligned}
\end{equation}

In a homogeneous medium, the propagation of electromagnetic waves can be understood using the plane-wave ansatz
\[
    (E,H)(x,t)
    =
    (E_0,H_0)e^{i(p\cdot x-\omega t)}
\]
with the wave (co)vector $p\in\R^3$ and frequency $\omega$.
Upon taking Fourier transforms, we replace
\[
    \nabla\mapsto ip,
    \qquad
    \partial_t\mapsto -i\omega
\]
in~\eqref{eq:maxwell-time} and obtain the system
\begin{equation}
    \label{eq:plane-wave-system}
    \begin{aligned}
        p\times E &= \omega B,\\
        p\times H &= -\omega D.
    \end{aligned}
\end{equation}
We use the matrix $C_p$ in~\eqref{eq:Cp_defn} to rewrite the system more compactly as
\begin{equation}
    \label{eq:maxwell-symbol}
    \left(
    C_p-\omega M
    \right)
    \begin{pmatrix}
        E_0\\
        H_0
    \end{pmatrix}
    =0.
\end{equation}

A nontrivial plane wave exists precisely when the matrix in
\eqref{eq:maxwell-symbol} is singular, and the polarization vector $(E_0,H_0)$ is in its kernel. 
We thus get the condition
\[
    \det(C_p-\omega M)=0.
\]
There is a universal factor of $\omega^2$:
\[
    \det(C_p-\omega M) = \omega^2\widetilde P_{\eps,\mu}(\omega,p),
\]
where $\widetilde P_{\eps,\mu}$ is homogeneous of degree $4$. 
The \emph{reduced propagating characteristic surface} is $\V(\widetilde P_{\eps,\mu})\subset \PP^3$, and the affine Fresnel surface is the chart where we normalize $\omega = 1$:
\[
    \Sigma_{\eps,\mu}
    :=
    \{p\in\R^3:\widetilde P_{\eps,\mu}(1,p)=0\}
\]
This leads to the Fresnel polynomial 
\[
    P_{\eps,\mu}(p) := \widetilde P_{\eps,\mu}(1,p) = \det(C_p - M).
\]
cutting out the affine Fresnel surface.

The assumptions of homogeneity and lack of charge were convenient for global analysis but, in fact, unnecessary for understanding the propagation of electromagnetic waves.
If the material parameters vary, one can use essentially the same procedure as above and show that $C_p-\omega M(x)$ is the principal symbol of the Maxwell equations.
The densities of electric charge and current are lower order effects and do not contribute to the principal symbol.
The set of spacetime covectors $(p,\omega)$ for which this symbol has a kernel is exactly the Fresnel surface.
In other words, the Fresnel surface is the characteristic variety of Maxwell's equations. 
Singularities of electromagnetic waves propagate along the bicharacteristic flow generated by the Hamiltonian of $P_{\eps,\mu}$, and the Fresnel surface describes the admissible wave covectors at each point of the medium.
This is a form of wave--particle duality in the high-frequency limit that follows from microlocal analysis; see, e.g., ~ \ cite{Dencker} for details on the propagation of singularities for well-behaved hyperbolic systems.

It is easier to think of our results in the context of homogeneous media, since the Fresnel surface is a single surface rather than varying from point to point, and it can be obtained directly from travel-time measurements.
This reduction is exactly the same as in~\cite{tonyjoonas2023} for the elastic wave equation, while the algebraic nature of the characteristic variety is quite different.
The results we present here lay the foundations for understanding gauge freedoms in the permittivity and permeability fields $\eps(x)$ and $\mu(x)$, too.

The factor $\omega^2$ can be seen to correspond to two polarizations having zero frequency ($\omega=0$ is a double root of the determinant).
These polarizations point along the wave vector but do not propagate, and it is physically sound to remove these spurious roots by factoring out the $\omega^2$.
This factor can also be seen to stem from the rank-deficiency of $C_p$.

\section{Review: Related Work}
\label{sec:related_work}

\subsection{Algebraic Connections}
\label{sec:related-results-algebra}

The study of Fresnel surfaces originated with Fresnel's 1822 work on light propagation in anisotropic crystals~\cite{Dolgachev}. 
In birefringent media, a quartic dispersion relation constrains admissible wave covectors; its zero set defines the Fresnel surface. 
Classical investigations by Kummer in 1864 revealed a deep connection between these optical wave surfaces and quartic surfaces with 16 nodes in algebraic geometry, now known as Kummer surfaces. 
The singular points of these surfaces govern physically significant propagation phenomena, like conical refraction. 
Bateman \cite{bateman1910kummer}, Hudson \cite{hudson1905kummer}, and Dolgachev~\cite{Dolgachev} provide historical and modern treatments of this connection.

The geometric theory of electromagnetic wave propagation has undergone substantial development in recent decades through the framework of local and linear constitutive laws. 
Notably, Hehl and Obukhov formulated premetric electrodynamics, where the Fresnel surface emerges naturally as the characteristic variety of Maxwell's equations \cite{hehlobukhov2003}. 
Subsequent studies \cite{bergaminfavaro2014,lindellfavarobergamin2011,Baekler2014,FavaroHehl2016} clarified this Fresnel–Kummer link, proving that several physically relevant classes of skewon-free linear constitutive tensors generate Fresnel surfaces projectively equivalent to classical Kummer surfaces, and detailing the resulting singularity structures. 
These results underscore how the Fresnel surface's geometry encodes critical information about the underlying constitutive parameters.

The singularities of Fresnel surfaces have been studied extensively both from physical and algebraic-geometric viewpoints. 
In generic local and linear electromagnetic media, isolated singular points on the Fresnel surface reflect algebraic constraints on the constitutive tensor. 
Favaro and Hehl classified and visualized these singularities \cite{favaro2011,favaro2016}.

Our work also draws on recent developments in anisotropic elasticity for inspiration and techniques. 
In that domain, the Christoffel equation determines a slowness surface analogous to the Fresnel surface in electromagnetism. 
Algebraic-geometric techniques were used to tackle questions of irreducibility, singularities, and the recovery of elastic parameters from these wave surfaces~\cite{tonyjoonas2023}. 
Yet, whereas elasticity phenomena typically give rise to sextic surfaces, Maxwell's equations naturally yield quartic Fresnel surfaces with a substantially different geometric structure.

From the perspective of inverse problems, the problems studied in this article differ fundamentally from the classical inverse boundary value problems for Maxwell's equations. 
Rather than extracting material parameters from boundary measurements, we investigate whether the constitutive parameters can be recovered from the characteristic geometry of the Maxwell system itself. 
In this sense, our work is closer to the study of inverse problems for characteristic varieties and dispersion relations. 
The main results show that, under suitable assumptions, the Fresnel surface uniquely determines the constitutive parameters, thereby establishing a direct correspondence between the geometry of wave propagation and the underlying electromagnetic medium.

\newpage

\subsection{Inverse Problems Connection}
\label{sec:related-results-inverse}

Inverse problems for Maxwell's equations concern the recovery of electromagnetic material parameters from boundary measurements. In the time-harmonic setting, one considers
\begin{equation}
    \label{eq:maxwell-euclidean}
    \nabla\times E=i\omega \mu H,
    \qquad
    \nabla\times H=-i\omega \gamma E,
    \qquad
    \gamma=\varepsilon+i\sigma/\omega,
\end{equation}
where $\omega>0$ is fixed, $\varepsilon$ is the electric permittivity, $\mu$ is the magnetic permeability, and $\sigma$ is the conductivity. 
In the geometric formulation on an oriented Riemannian three-manifold $(M,g)$, the equations become
\begin{equation}\label{eq:maxwell-forms}
    {\star}\dd E=i\omega\mu H,
    \qquad
    {\star}\dd H=-i\omega\varepsilon E,
\end{equation}
and the boundary measurements are encoded by the admittance map
\begin{equation}\label{eq:admittance}
    \Lambda:f=tE\longmapsto tH,
\end{equation}
where $t$ denotes the tangential trace. 
This invariant formulation is particularly well suited to anisotropic constitutive laws.

The fixed-frequency Maxwell inverse problem is closely related to Calder\'on's inverse conductivity problem. 
In the formal zero-frequency limit one recovers the conductivity equation, and many uniqueness proofs rely on the complex geometrical optics method introduced by Sylvester and Uhlmann for Calder\'on's problem \cite{calderon1980inverse,sylvesteruhlmann1987global}. 
Early work on the Maxwell inverse boundary value problem includes the papers of Somersalo, Isaacson and Cheney \cite{somersalo1992linearized} and Sun and Uhlmann \cite{sunuhlmann1992maxwell}. 
The first global uniqueness theorem for smooth isotropic electromagnetic parameters was established by Ola, P\"aiv\"arinta and Somersalo \cite{ola1993electrodynamics}.

A central analytical idea is to augment the first-order Maxwell system with auxiliary scalar fields and reduce it to a Dirac-type system
\begin{equation}
    \label{eq:dirac}
    (P-k+W)Y=0,
\end{equation}
whose square yields a matrix Schr\"odinger equation
\begin{equation}
    \label{eq:schroedinger}
    (P-k+W)(P+k-W^t)Z=(-\Delta-k^2+Q)Z=0.
\end{equation}
This reduction, developed by Ola and Somersalo using generalized Sommerfeld potentials \cite{ola1996sommerfeld}, has become a standard tool for analyzing Maxwell inverse problems. 
Global uniqueness has subsequently been extended to lower regularity coefficients by Caro and Zhou \cite{carozhou2014global}.

The anisotropic problem is substantially more delicate because of the natural geometric gauge invariance. 
Kenig, Salo and Uhlmann proved uniqueness for time-harmonic Maxwell equations on admissible Riemannian three-manifolds and, in Euclidean domains, for matrix-valued coefficients belonging to a common conformal class associated with an admissible metric \cite{kenig2011anisotropicmaxwell}. 
Their proof combines the Dirac--Schr\"odinger reduction with complex geometrical optics constructions on admissible manifolds, extending the approach of Dos Santos Ferreira, Kenig, Salo and Uhlmann \cite{dossantosferreira2009limiting}. 
In contrast, singular or degenerate anisotropic coefficients exhibit the nonuniqueness phenomena of transformation optics and invisibility cloaking \cite{greenleaf2007fullwave,greenleaf2009invisibility}.

\section{Gauge Freedom}
\label{sec:gauge_freedom}

Recall that, quite generally, the material parameters of an electromagnetic medium are collected into a $6\times6$ block matrix
\begin{equation}
    M
    =
    \begin{pmatrix}
        \eps&\xi\\
        \eta&\mu
    \end{pmatrix}
\end{equation}
composed of $3\times 3$ real blocks, such that $\eps$ and $\mu$ are symmetric, and $\xi = -\eta^T$. 
In this generality, the Fresnel polynomial is defined by
\begin{equation}
    P_{\eps,\mu}^{\eta,\xi}(p)
    =
    \det(C_p-M),
\end{equation}
where $C_p$ is the matrix~\eqref{eq:Cp_defn}. 
Let $A = \mathbb{R}[\eps_{1}, \ldots, \eps_{6}, \mu_{1}, \ldots, \mu_{6}]$. 
A calculation shows that, as a polynomial in $A[p_1,p_2,p_3]$, the Fresnel polynomial is supported on the 22 monomials
\begin{equation}
    \begin{aligned}
        &\text{degree $4$: } p_1^4, p_1^3 p_2, p_1^3 p_3, p_1^2 p_2^2, p_1^2 p_2 p_3, p_1^2 p_3^2, p_1 p_2^3, p_1 p_2^2 p_3, p_1 p_2 p_3^2, p_1 p_3^3, p_2^4, p_2^3 p_3, p_2^2 p_3^2, p_2 p_3^3, p_3^4, \\
        &\text{degree $2$: }p_1^2, p_1 p_2, p_1 p_3, p_2^2, p_2 p_3, p_3^2, \\
        &\text{degree $0$: } 1.
    \end{aligned}
\end{equation}

To what extent does the Fresnel surface determine a material's parameters?
That is, does the Fresnel polynomial $p\mapsto P_{\epsilon,\mu}^{\eta,\xi}(p)$ determine $\eps$, $\mu$, $\eta$, and $\xi$? 
Not in this generality: There is a gauge freedom to this question, as follows.

\begin{proposition}
    \label{prop:fresnel_gauge_freedom}
    For $t\in \R\setminus\{0\}$ we have
    \begin{equation}
        P_{\epsilon,\mu}^{\eta,\xi}(p)
        =
        P_{t\epsilon,t^{-1}\mu}^{\eta,\xi}(p),
        \qquad\text{and}\qquad
        P_{\epsilon,\mu}^{\eta,\xi}(p)
        =
        P_{\mu,\epsilon}^{-\xi,-\eta}(p)
        .
    \end{equation}
\end{proposition}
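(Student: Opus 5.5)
The plan is to realize both identities as similarity-type transformations of the $6\times 6$ matrix $C_p-M$ by block-diagonal or block-permutation matrices of total determinant $1$. The block shape of $C_p$ from~\eqref{eq:Cp_defn} is preserved under these transformations, and the blocks of $M$ are rescaled or permuted. Throughout, write $M=\begin{pmatrix}\eps&\xi\\ \eta&\mu\end{pmatrix}$, so the superscripts of $P^{\eta,\xi}_{\eps,\mu}$ record the bottom-left and top-right blocks.

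\emph{Scaling identity.} For scalars $a,b,c,d$, put $S=\diag(aI,bI)$ and $T=\diag(cI,dI)$. A direct block computation gives
\[
S C_p T=\begin{pmatrix}0&-ad\,p\times\\ bc\,p\times&0\end{pmatrix},
\qquad
S M T=\begin{pmatrix}ac\,\eps&ad\,\xi\\ bc\,\eta&bd\,\mu\end{pmatrix}.
\]
Choose $a=t$, $b=c=1$, $d=t^{-1}$. Then $ad=bc=1$, so $SC_pT=C_p$ and the off-diagonal blocks of $M$ are unchanged. The diagonal blocks become $t\eps$ and $t^{-1}\mu$. Since $\det S\det T=t^3t^{-3}=1$, taking determinants of $S(C_p-M)T$ gives $P^{\eta,\xi}_{\eps,\mu}(p)=P^{\eta,\xi}_{t\eps,t^{-1}\mu}(p)$. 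This works for every $t\neq0$, and no square roots are needed.

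\emph{Swap identity.} Let $J=\begin{pmatrix}0&I\\ I&0\end{pmatrix}$ and $K=\diag(I,-I)$; both satisfy $J^2=K^2=I$.
\begin{itemize}
\item Conjugating by $J$ swaps the blocks. This gives $JC_pJ=-C_p$ and $JMJ=\begin{pmatrix}\mu&\eta\\ \xi&\eps\end{pmatrix}$.
\item Conjugating by $K$ negates the off-diagonal blocks. This gives $KC_pK=-C_p$ again, and sends $\begin{pmatrix}\mu&\eta\\ \xi&\eps\end{pmatrix}$ to $\begin{pmatrix}\mu&-\eta\\ -\xi&\eps\end{pmatrix}$.
\end{itemize}
Hence
\[
(KJ)(C_p-M)(KJ)^{-1}=C_p-\begin{pmatrix}\mu&-\eta\\ -\xi&\eps\end{pmatrix},
\]
and taking determinants yields $\det(C_p-M)=\det\bigl(C_p-M'\bigr)$ for this new matrix $M'$. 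In the paper's notation, $M'$ has top-right block $\xi'=-\eta$ and bottom-left block $\eta'=-\xi$. So the right-hand side is exactly $P^{-\xi,-\eta}_{\mu,\eps}(p)$. The constraint $\xi'=-\eta'^T$ is preserved, since $-\eta=(-\xi)^T\cdot(-1)$ follows from $\xi=-\eta^T$.

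\emph{Main obstacle.} There is no real difficulty here; the only thing to get right is the bookkeeping. One must check the sign in $JC_pJ=-C_p$, which is why the extra conjugation by $K$ is needed rather than appealing to evenness of $P$ in $p$. One must also match the superscript ordering $(\eta,\xi)$ correctly to the bottom-left and top-right blocks.
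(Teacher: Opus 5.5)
Your proof is correct and is essentially the paper's argument: your $S=\diag(tI,I)$ and $T=\diag(I,t^{-1}I)$ are exactly the paper's $L_t$ and $R_t$, and your composite $KJ=\begin{pmatrix}0&I\\-I&0\end{pmatrix}$ is precisely the paper's $J$. Since $(KJ)^{-1}=-KJ$, your conjugation $(KJ)(C_p-M)(KJ)^{-1}$ coincides with the paper's $J^{-1}(C_p-M)J$, so splitting it into a block swap followed by a sign flip is just a more transparent bookkeeping of the same computation.
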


\begin{proof}
    Let
    \begin{equation}
        L_t
        =
        \begin{pmatrix}
            t I & 0 \\
            0 & I
        \end{pmatrix},
        \qquad
        R_t
        =
        \begin{pmatrix}
            I & 0 \\
            0 & t^{-1} I
        \end{pmatrix},
        \qquad
        \text{and}
        \qquad
        J
        =
        \begin{pmatrix}
            0 & I \\
            -I & 0
        \end{pmatrix}
        ,
    \end{equation}
    all understood as $6\times6$ matrices written using $3\times 3$ blocks. We have $\det(L_t)\det(R_t)=\det(J)=1$, $L_t C_p R_t = C_p$, and
    \[
        L_t
        M
        R_t = 
        \begin{pmatrix}
            t\eps & \xi \\
            \eta & t^{-1}\mu
        \end{pmatrix}
    \]
    Taking determinants of this last equality gives $P_{\epsilon,\mu}^{\eta,\xi}(p) = P_{t\epsilon,t^{-1}\mu}^{\eta,\xi}(p)$.
    We also have
    \begin{equation}
        J^{-1}(C_p - M)J = C_p - 
        \begin{pmatrix}
            \mu & -\eta \\
            -\xi & \eps
        \end{pmatrix}.
    \end{equation}
    Taking determinants we get $P_{\epsilon,\mu}^{\eta,\xi}(p) = P_{\mu,\epsilon}^{-\xi,-\eta}(p)$.
\end{proof}

\section{The Magnetically Isotropic Case}
\label{sec:magnetically_isotropic_case}

Throughout this section $\mu=I$, so we omit it from the notation. 
To condense further, we use the shorthand Voigt notation
\begin{equation}
    11 \rightsquigarrow 1, 22 \rightsquigarrow 2, 33 \rightsquigarrow 3, 23, 32 \rightsquigarrow 4, 13, 31 \rightsquigarrow 5, 12, 21 \rightsquigarrow 6
\end{equation}
to write permittivity as
\begin{equation}
    \eps
    =
    \begin{pmatrix}
        \eps_{1} & \eps_{6} & \eps_{5} \\
        \eps_{6} & \eps_{2} & \eps_{4} \\
        \eps_{5} & \eps_{4} & \eps_{3}
    \end{pmatrix}
    .
\end{equation}
With these conventions, the Fresnel polynomial~\eqref{eq:fresnel-polynomial-definition} is given by
\begin{equation}
    \label{eq:fresnel_polynomial_eta_xi_zero}
    \begin{aligned}
        P_{\eps, I} &= \eps_1 p_1^4 + 2\eps_6 p_1^3 p_2 + 2\eps_5 p_1^3 p_3 + (\eps_1 + \eps_2) p_1^2 p_2^2 + 2\eps_4 p_1^2 p_2 p_3 + (\eps_1 + \eps_3) p_1^2 p_3^2 \\
                    &+ (-\eps_1 \eps_2 - \eps_1 \eps_3 + \eps_5^2 + \eps_6^2) p_1^2 + 2\eps_6 p_1 p_2^3 + 2\eps_5 p_1 p_2^2 p_3 + 2\eps_6 p_1 p_2 p_3^2 \\
                    &+ (-2\eps_3 \eps_6 + 2\eps_4 \eps_5) p_1 p_2 + 2\eps_5 p_1 p_3^3 + (-2\eps_2 \eps_5 + 2\eps_4 \eps_6) p_1 p_3 + \eps_2 p_2^4 \\
                    &+ 2\eps_4 p_2^3 p_3 + (\eps_2 + \eps_3) p_2^2 p_3^2 + (-\eps_1 \eps_2 - \eps_2 \eps_3 + \eps_4^2 + \eps_6^2) p_2^2 + 2\eps_4 p_2 p_3^3 \\
                    &+ (-2\eps_1 \eps_4 + 2\eps_5 \eps_6) p_2 p_3 + \eps_3 p_3^4 + (-\eps_1 \eps_3 - \eps_2 \eps_3 + \eps_4^2 + \eps_5^2) p_3^2 \\
                    &+ \eps_1 \eps_2 \eps_3 - \eps_1 \eps_4^2 - \eps_2 \eps_5^2 - \eps_3 \eps_6^2 + 2\eps_4 \eps_5 \eps_6
                    .
    \end{aligned}
\end{equation}

\subsection{Irreducibility Results}

\begin{lemma}
    \label{lma:compact}
    If $\eps$ and $\mu$ are positive definite, then the Fresnel surface $\Sigma_{\eps,\mu}$ is compact in the Euclidean topology.
\end{lemma}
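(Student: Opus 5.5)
The Fresnel surface $\Sigma_{\eps,\mu}$ is the zero set of a polynomial, so it is closed in $\R^3$. Compactness therefore reduces to boundedness. I will show this by proving that the quartic leading form of $P_{\eps,\mu}$ has no real zeros other than the origin.

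\textbf{Identifying the leading form.} The top-degree homogeneous part of $P_{\eps,\mu}(p)=\det(C_p-M)$ is obtained by scaling $p=s\hat p$ and reading off the coefficient of $s^4$. Because $C_p$ has rank $4$, $\det C_p=0$, so no $s^6$ term appears. The $s^5$ coefficient vanishes by parity, since only even degrees occur. So I need the $s^4$ coefficient, $P_4(\hat p)$.

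I plan to compute $P_4$ by block elimination rather than by listing monomials. Write $K=\hat p\times$. For $s$ large and $s\omega$ in place of $\omega$, go back to $\det(C_p-\omega M)$ and use the Schur complement:
\[
\det\begin{pmatrix}-\omega\eps & -sK\\ sK & -\omega\mu\end{pmatrix}=\det(\omega\mu)\,\det\bigl(\omega\eps+s^2K\mu^{-1}K/\omega\bigr).
\]
This means $P_4(\hat p)$ equals, up to the nonzero constant $\det\mu$, the coefficient extracted from $\det(\eps+K\mu^{-1}K)$ restricted in the appropriate way. Concretely, $P_4(\hat p)$ is proportional to the $2\times2$ minor contribution $\det\mu\cdot \mathrm{sum}$ of principal $2\times 2$ minors of $\eps^{-1}K\mu^{-1}K$ times $\det\eps$.

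\textbf{Main step.} Show that $P_4(\hat p)\neq0$ for every real $\hat p\neq0$. The operator $-K\mu^{-1}K=K^T\mu^{-1}K$ is positive semidefinite with kernel spanned by $\hat p$, and it is positive definite on $\hat p^\perp$. So $\eps^{-1}K^T\mu^{-1}K$ is similar to $\eps^{-1/2}K^T\mu^{-1}K\eps^{-1/2}$, which is positive semidefinite of rank exactly $2$. It therefore has two positive eigenvalues and one zero eigenvalue. The $s^4$ coefficient is the second elementary symmetric function of these eigenvalues, which is the product of the two positive ones, times $\det\eps\det\mu>0$. Hence it is nonzero.

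I expect the main obstacle to be pinning down exactly which coefficient of this expansion is the leading form, with the correct normalization. A cleaner route is to factor directly:
\[
\det(C_p-M)=\det\mu\,\det(-\eps+K_p\mu^{-1}K_p),
\]
where $K_p=p\times$, so that $K_p$ is linear in $p$. Expanding $\det(-\eps+sA)$ in $s$ with $A=K\mu^{-1}K$ of rank $2$, the $s^3$ term is $\det A=0$. The $s^2$ term is $\operatorname{tr}(\mathrm{adj}(-\eps)\ldots)$, which simplifies to $-\det\eps\cdot e_2(\eps^{-1}A)$. This is of degree $4$ in $p$ and definite, as argued above. I would carefully verify the sign-independent nonvanishing, which is all that is needed.

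\textbf{Conclusion.} By homogeneity and compactness of the unit sphere, $|P_4(p)|\ge c|p|^4$ with $c>0$. The lower-degree terms are $O(|p|^2+1)$. Hence $P_{\eps,\mu}(p)\neq0$ for $|p|$ large, so $\Sigma_{\eps,\mu}$ is bounded, and being closed it is compact.
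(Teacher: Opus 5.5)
Your argument is correct, but it takes a different route from the paper. The paper argues ray by ray. For $u\in S^2$ the symmetric matrix $B_u=M^{-1/2}C_uM^{-1/2}$ has inertia $(2,2,2)$. A point $tu$ lies on $\Sigma_{\eps,\mu}$ only if $1/t$ is a nonzero eigenvalue of $B_u$. Continuity of ordered eigenvalues over the compact sphere bounds these eigenvalues away from zero, and hence bounds $|t|$. You instead show that the quartic leading form is definite.

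There is one slip to fix. With $\omega=1$, your first (correct) Schur-complement display gives $\det(C_p-M)=\det\mu\,\det(\eps+K_p\mu^{-1}K_p)$, not $\det\mu\,\det(-\eps+K_p\mu^{-1}K_p)$. The latter equals $-\det\mu\,\det(\eps+K_p^T\mu^{-1}K_p)<0$ and would make every Fresnel surface empty. As you anticipated, this is harmless: the quartic part is $\pm\det\eps\,\det\mu\;e_2(\eps^{-1}K_p^T\mu^{-1}K_p)$, and $e_2$ is insensitive to the sign.

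You can also drop the hedging about normalization entirely. The quartic part is $\det\mu\,\operatorname{tr}\bigl(\eps\,\operatorname{adj}(K_p\mu^{-1}K_p)\bigr)$. Since $\operatorname{adj}(K_p)=pp^T$ and $\operatorname{adj}(\mu^{-1})=\mu/\det\mu$, it equals $(p^T\eps p)(p^T\mu p)$, which is visibly positive for $p\neq0$. For $\mu=I$ this is the term $|p|^2\,p^T\eps p$ in the paper's identity~\eqref{eq:fresnel-mu-identity}.

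Your route gives explicit algebraic information: the real points of the projective closure avoid the plane at infinity. The paper's spectral route is shorter, and it sets up the matrix $B_u$ whose inertia is reused in Lemma~\ref{lem:fresnel_factorization_quadratics} (four real roots on every line through the origin) and in the smooth-point argument of \S\ref{sec:polys_to_surfaces}. A leading-form bound alone does not give that real-rootedness.
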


\begin{proof}
    It is closed, so it suffices to prove boundedness. 
    For $u\in S^2$, put
    \[
        B_u
        =
        M^{-1/2}C_uM^{-1/2}.
    \]
    The matrix $B_u$ is symmetric and congruent to $C_u$. It has two positive eigenvalues, two negative eigenvalues, and two zero eigenvalues. 
    The ordered eigenvalues of a symmetric matrix depend continuously on its entries. 
    Since the map $u\longmapsto B_u$ is continuous and $S^2$ is compact, there is a constant $c>0$ such that every nonzero eigenvalue of every $B_u$ has absolute value at least $c$. 
    Write $p=tu$ with $u\in S^2$. 
    If $p\in\Sigma_{\eps, \mu}$, then $1$ is an eigenvalue of
    \[
        M^{-1/2}C_{tu}M^{-1/2}
        =
        tB_u.
    \]
    Thus $|t|\le c^{-1}$. Hence $\Sigma_{\eps, \mu}$ is bounded.
\end{proof}

\begin{lemma}
    \label{lem:fresnel_factorization_quadratics}
    Let $M$ be a real symmetric positive definite matrix. If $P_M(p)=\det(C_p-M)$ is reducible over $\C$, then it is a product of two real quadratic polynomials.
\end{lemma}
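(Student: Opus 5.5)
The plan is to use two pieces of real structure. First, $P_M$ is even and its top-degree part is positive definite. Second, every real ray from the origin meets the real surface $\Sigma$. Together these rule out linear factors and then force the two quadratic factors to be real.

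\emph{Step 1 (restriction to lines).} Fix $u\in S^2$ and set $B_u=M^{-1/2}C_uM^{-1/2}$, as in the proof of Lemma~\ref{lma:compact}. Then
\[
P_M(tu)=\det(M)\det(tB_u-I).
\]
Since $B_{-u}=-B_u$ and $B_u$ is symmetric of rank $4$ with signature $(2,2)$, its spectrum is $\{\pm a(u),\pm b(u),0,0\}$ with $a(u),b(u)>0$; evenness of $P_M$ gives this symmetry. Hence
\[
P_M(tu)=\det(M)\bigl(1-a(u)^2t^2\bigr)\bigl(1-b(u)^2t^2\bigr).
\]
Two consequences follow. (i) The top-degree homogeneous part satisfies $P_4(u)=\det(M)\,a(u)^2b(u)^2>0$ for all real $u\neq0$, so $P_4$ is positive definite on $\R^3$. (ii) Every real line through the origin meets $\Sigma_M$, at $t=\pm1/a(u)$. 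Consequently $\Sigma_M$ is a radial graph over $S^2$, and in particular it is not contained in any real algebraic set of dimension $\le1$.

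\emph{Step 2 (no linear factors).} Suppose $P_M=gh$ over $\C$ with $\deg g=1$. Then the top forms factor as $P_4=g_1h_3$, where $g_1(p)=\alpha\cdot p$ with $\alpha\in\C^3\setminus\{0\}$. Write $\alpha=\beta+i\gamma$ with $\beta,\gamma\in\R^3$. The real system $\beta\cdot u=\gamma\cdot u=0$ has a nonzero real solution $u$, and at that $u$ we get $P_4(u)=0$, contradicting Step~1(i). So every nontrivial factorization is $P_M=q_1q_2$ with $q_1,q_2$ quadrics. Moreover, each $q_i$ is irreducible over $\C$, since otherwise a linear factor would appear.

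\emph{Step 3 (reality of the quadrics).} Complex conjugation maps $P_M$ to itself, so by unique factorization it permutes $\{q_1,q_2\}$ up to scalars.

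If $\bar q_1=cq_1$ for some scalar $c$, then $|c|=1$, and rescaling $q_1$ by a square root of $c$ makes $q_1$ real; then $q_2=P_M/q_1$ is real as well, and we are done.

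Otherwise $\bar q_1=cq_2$, so $P_M=c^{-1}q_1\bar q_1$. Write $q_1=r+is$ with $r,s$ real quadrics. These are $\R$-linearly independent; otherwise $q_1$ would be a complex multiple of a real quadric, which is the previous case. A real point $p$ lies on $\Sigma_M$ iff $q_1(p)\overline{q_1(p)}=0$, i.e. iff $r(p)=s(p)=0$. Now $r$ and $s$ are coprime unless they share a linear factor; such a factor would then divide $q_1$, which Step~2 excludes. Two coprime real polynomials in three variables have a common real zero set of dimension at most $1$. That contradicts Step~1(ii), so this case cannot occur.

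The main obstacle I expect is Step~3, and specifically justifying that $\V_\R(r,s)$ has real dimension $\le1$ when $r,s$ are coprime. The cleanest route I would try is: the complex variety $\V(r,s)\subset\C^3$ is a curve because $r,s$ are coprime, and its real points therefore form a semialgebraic set of dimension $\le1$. That set cannot contain the $2$-dimensional radial graph $\Sigma_M$.
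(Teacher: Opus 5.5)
Your proof is correct, but it takes a genuinely different route from the paper's.

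\emph{The paper's route.} It works one irreducible factor at a time. It normalizes a complex factor $f$ by $f(0)=1$ and notes that, for generic real $u$, the polynomial $f(tu)$ divides $P_M(tu)$, whose roots are all real and nonzero. Hence $f(tu)=\prod_i(1-t/r_i)$ has real coefficients, so every homogeneous part of $f$ is real on an open set, and $f$ is real. Only afterwards does it exclude linear factors, which are now real, by the compactness of $\Sigma_M$ (Lemma~\ref{lma:compact}).

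\emph{Your route.} You reverse the order. First you exclude linear factors over $\C$, using that the top form $P_4$ is positive definite while every complex linear form has a nonzero real zero. This disposes of non-real factors such as $p_1+i$ directly; compactness of the real locus alone would not rule those out. Then you settle the $2+2$ case by letting complex conjugation act on the two quadrics. You rule out the swapped case by a dimension count: $\Sigma_M$ would equal $\V_\R(r,s)$ with $r,s$ coprime, hence have dimension at most $1$, yet it meets every ray from the origin.

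\emph{Trade-offs.} The paper's argument is shorter and needs no real-algebraic dimension theory. It also works verbatim in any degree: every factor of a polynomial that is real-rooted on all lines through a non-root is real. Your conjugation step, by contrast, relies on the conjugate pair of quadrics exhausting $P_M$. In return, you use only two coarse consequences of hyperbolicity: definiteness of the top form and two-dimensionality of the real locus.

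\emph{Two cosmetic points.} First, $\Sigma_M$ is not literally a radial graph, since it has the two sheets $u\mapsto u/a(u)$ and $u\mapsto u/b(u)$. What you actually use is that radial projection $\Sigma_M\to S^2$ is surjective and semialgebraic, which already forces $\dim\Sigma_M=2$. Second, evenness of $P_M$ is not needed in Step~1. The four nonzero eigenvalues of $B_u$ are two positive and two negative, so the $t^4$-coefficient $\det(M)\prod_i\lambda_i(u)$ is positive and every ray carries a root.
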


\begin{proof}
    For every $u\in S^2$, the matrix $B_u$ in the proof of Lemma~\ref{lma:compact} has four nonzero real eigenvalues. 
    Hence $P_M(tu)$ has four real roots, counted with multiplicity. 
    Let $f\in\C[p_1,p_2,p_3]$ be an irreducible factor of $P_M$. 
    Since $P_M(0)=\det M\neq0,$ we may normalize $f(0)=1$. 
    For generic real $u$, the degree of $f(tu)$ equals $\deg f$, and all of its roots are among the real roots of $P_M(tu)$. 
    Its constant term is~$1$, and none of those roots are~$0$, so its leading coefficient, and hence all its coefficients, are real. 
    Writing $f=\sum_j f_j$ as a sum of homogeneous parts, this says that every $f_j(u)$ is real for $u$ in a Euclidean-open set. 
    The imaginary part of $f_j$ is a real polynomial vanishing on that open set, so it is identically zero. 
    Hence $f$ has real coefficients.

    A nonconstant real linear factor would vanish on an unbounded affine plane contained in $\Sigma_M$, contradicting Lemma~\ref{lma:compact}. 
    Since $\deg P_M=4$, every proper factorization therefore has degrees $2+2$.
\end{proof}

For later use, when $\mu=I$ one has the useful identity
\begin{equation}
    \label{eq:fresnel-mu-identity}
    P_{\eps,I}(p)
    =
    |p|^2\,p^T\eps p
    -
    p^T\!\bigl((\operatorname{tr}\eps)\eps-\eps^2\bigr)p
    +
    \det\eps.
\end{equation}
It agrees term-by-term with the displayed Voigt-coordinate formula. 
In an orthonormal eigenbasis, writing $\eps=\diag(\eps_1,\eps_2,\eps_3)$ and $p=(p_{1},p_{2},p_{3}),$ this becomes
\begin{equation}
    \label{eq:fresnel-diagonal}
    P
    =
    sq
    -
    \eps_1(\eps_2+\eps_3)p_{1}^2
    -
    \eps_2(\eps_1+\eps_3)p_{2}^2
    -
    \eps_3(\eps_1+\eps_2)p_{3}^2
    +
    \eps_1\eps_2\eps_3,
\end{equation}
where
\[
    s=p_{1}^2+p_{2}^2+p_{3}^2,
    \qquad
    q=\eps_1p_{1}^2+\eps_2p_{2}^2+\eps_3p_{3}^2.
\]

\begin{proof}[Proof of Theorem~\ref{thm:reducibility-characterization}]
    The assertion is invariant under an orthogonal change of coordinates, so we may take $\eps=\diag(\eps_1,\eps_2,\eps_3),$ with $\eps_1,\eps_2,\eps_3>0$. 
    Assume first that $P=P_{\eps,I}$ is reducible. 
    By Lemma~\ref{lem:fresnel_factorization_quadratics}, write $P=gh,$ with $g,h\in\R[p_{1},p_{2},p_{3}]$ of degree two. 
    Let $g_i,h_i$ denote their homogeneous parts of degree $i$. 
    The degree-four part of $P$ is $sq$. If $\eps_1=\eps_2=\eps_3$, there is already a repeated eigenvalue, so suppose $q$ is not a scalar multiple of $s$. 
    The positive definite quadrics $s$ and $q$ are irreducible and nonassociate in the UFD $\R[p_{1},p_{2},p_{3}]$. 
    Consequently, after interchanging $g$ and $h$ and rescaling them by inverse constants, we may assume that
    \[
        g_2=s,
        \qquad\text{and}\qquad
        h_2=q.
    \]

    The degree-three part of $gh$ is zero, so
    \[
        sh_1+qg_1=0.
    \]
    Since $\gcd(s,q)=1$, the polynomial $s$ divides $g_1$. But $\deg g_1\le1$, so $g_1=0$; similarly $h_1=0$. Thus
    \[
        g=s+\gamma,
        \qquad
        h=q+\delta,
    \]
    for constants $\gamma,\delta\in\R$. 
    Comparing the degree-two part with~\eqref{eq:fresnel-diagonal} gives
    \begin{equation}
        \begin{aligned}
            \delta+\gamma \eps_1 &= -\eps_1(\eps_2+\eps_3), \\
            \delta+\gamma \eps_2 &= -\eps_2(\eps_1+\eps_3), \\
            \delta+\gamma \eps_3 &= -\eps_3(\eps_1+\eps_2).
        \end{aligned}
    \end{equation}
    Subtracting pairs of equations yields
    \begin{equation}
        \begin{aligned}
            (\eps_1-\eps_2)(\gamma+\eps_3) &= 0, \\
            (\eps_1-\eps_3)(\gamma+\eps_2) &= 0, \\
            (\eps_2-\eps_3)(\gamma+\eps_1) &= 0.
        \end{aligned}
    \end{equation}
    If $\eps_1,\eps_2,\eps_3$ were pairwise distinct, these equations would force $\gamma=-\eps_1=-\eps_2=-\eps_3,$ a contradiction. 
    Hence $\eps$ has a repeated eigenvalue.
    
    Conversely, suppose $\eps_2=\eps_3$. 
    A direct multiplication gives
    \begin{equation}
        \label{eq:repeated-eigenvalue-factorization}
        P_{\diag(\eps_1,\eps_2,\eps_2),I}
        =
        (s-\eps_2)\bigl(\eps_1p_{1}^2+\eps_2(p_{2}^2+p_{3}^2)-\eps_1\eps_2\bigr).
    \end{equation}
    If $\eps_1=\eps_2=\eps_3$, this specializes to
    \[
        P_{\eps_1I,I}
        =
        \eps_1(s-\eps_1)^2.
    \]
    Thus, a repeated eigenvalue implies reducibility.
\end{proof}

\subsection{Eigenvalues of $\eps$ and their Multiplicities}

Given a hypersurface $S\subset\R^3$, denote by $R(S)\subset S$ the set of points $p\in S$ where $S$ is not smooth or where $p$ and the normal vector of $S$ at $p$ are linearly dependent.
The set of these ``radial points'' helps identify $\eps$. 
In the two lemmas that follow $\eps$ is a positive definite $3\times3$ symmetric matrix. 
The permittivity $\eps$ can have three distinct eigenvalues, two distinct eigenvalues, or one eigenvalue. 
We consider each case separately.

\begin{lemma}
    \label{lma:eps-3-eigenvalues}
    If $\eps$ has three distinct eigenvalues, then $R(\Sigma_{\eps})$ consists of three circles, each centered at the origin and orthogonal to an eigenvector, whose squared radius equals the corresponding eigenvalue.
\end{lemma}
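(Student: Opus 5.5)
We will work in an orthonormal eigenbasis of $\eps$. Orthogonal changes of coordinates preserve $\Sigma_\eps$, the origin, normal directions and radial points, so we may take $\eps=\diag(\eps_1,\eps_2,\eps_3)$ with distinct positive $\eps_i$ and use the form~\eqref{eq:fresnel-diagonal} of $P$. Write $\sigma=\eps_1+\eps_2+\eps_3$. The coefficient of $p_i^2$ in the quadratic part is then $-a_i$ with $a_i=\eps_i(\sigma-\eps_i)$. In this notation the statement claims
\[
    R(\Sigma_\eps)=\bigcup_{k=1}^{3}\{p_k=0,\ s=\eps_k\}.
\]

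The key observation is that $p\in\Sigma_\eps$ lies in $R(\Sigma_\eps)$ exactly when $\nabla P(p)=\lambda p$ for some $\lambda\in\R$. The case $\lambda=0$ with $\nabla P(p)=0$ covers the singular points, and $p\neq0$ because $P(0)=\det\eps\neq0$. Differentiating~\eqref{eq:fresnel-diagonal} gives
\[
    \tfrac12\partial_iP=p_i\bigl(q+\eps_i s-a_i\bigr),
\]
so the radial condition reads as follows: for every $i$ with $p_i\neq0$, the number $q+\eps_i s-a_i$ equals the common value $\lambda/2$. Note that $\eps_i s-a_i=\eps_i(s-\sigma)+\eps_i^2$. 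For two indices $i\ne j$ this gives
\[
    \eps_i s-a_i-(\eps_j s-a_j)=(\eps_i-\eps_j)(s-\sigma+\eps_i+\eps_j)=(\eps_i-\eps_j)(s-\eps_k),
\]
where $k$ is the third index.

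We then split into cases according to how many coordinates of $p$ are nonzero.
\begin{enumerate}[(a)]
    \item If all three coordinates are nonzero, the identity above together with distinctness of the eigenvalues forces $s=\eps_1=\eps_2=\eps_3$, which is impossible. So no such points are radial.
    \item If exactly $p_i,p_j\neq0$ and $p_k=0$, the radial condition is equivalent to $s=\eps_k$. We must check that every such point lies on $\Sigma_\eps$. On the plane $p_k=0$, substituting $s=\eps_k$ into~\eqref{eq:fresnel-diagonal} should give
    \[
        P=\eps_i\eps_j(\eps_k-s)=0.
    \]
    This is a short coefficient computation: the $p_i^2$ coefficient is $\eps_k\eps_i-\eps_i(\eps_j+\eps_k)=-\eps_i\eps_j$. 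Hence the whole circle $\{p_k=0,\ s=\eps_k\}$ lies in $\Sigma_\eps$ and is radial.
    \item If only $p_i\neq0$, then $p$ is automatically radial, since $\nabla P$ is parallel to $e_i$. On the axis, $P=\eps_i(t^2-\eps_j)(t^2-\eps_k)$, so these points are $\pm\sqrt{\eps_j}\,e_i$ and $\pm\sqrt{\eps_k}\,e_i$. Each already lies on one of the circles from case (b).
\end{enumerate}
Combining the cases yields exactly the three circles. The circle for index $k$ lies in the plane orthogonal to the eigenvector $e_k$, is centered at the origin, and has squared radius $\eps_k$.

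The only real obstacle is bookkeeping. We must make sure that the singular points, where $\nabla P=0$, are captured by allowing $\lambda=0$. We must also verify the coefficient identity in case (b), so that the circles lie entirely on $\Sigma_\eps$ rather than merely satisfying the radial equation. Both are direct computations with~\eqref{eq:fresnel-diagonal}.
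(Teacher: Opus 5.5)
Your proof is correct and follows essentially the same route as the paper. After diagonalizing $\eps$, the paper encodes radiality through the $2\times2$ minors of the matrix with rows $p$ and $\nabla P(p)$; up to a constant factor these are exactly your quantities $p_ip_j(\eps_i-\eps_j)(s-\eps_k)$. It then makes the same case split, verifies that each circle $\{p_k=0,\ s=\eps_k\}$ lies on $\Sigma_\eps$, and observes that the twelve axis points lie on the circles; your $\nabla P=\lambda p$ formulation and your organization by the number of nonzero coordinates are only a repackaging of that argument.
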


\begin{proof}
    The claim is rotation invariant, so we may act by a rotation to diagonalize $\eps$ so that
    \begin{equation}
        \label{eq:eps-diag-matrix}
        \eps
        =
        \begin{pmatrix}
            \eps_1 & 0 & 0 \\
            0 & \eps_2 & 0 \\
            0 & 0 & \eps_3
        \end{pmatrix}
        .
    \end{equation}
    By assumption, the three diagonal entries are distinct.
    The Fresnel polynomial is now
    \begin{equation}
    \label{eq:eps-diag-polynomial}
        \begin{split}
            P_{\eps,I}(p_1,p_2,p_3)
            &=
            \eps_3p_3^4
            +(\eps_3+\eps_2)p_2^2p_3^2
            +(\eps_3+\eps_1)p_1^2p_3^2
            -(\eps_2\eps_3+\eps_1\eps_3)p_3^2
            \\&\quad
            +\eps_2p_2^4
            +(\eps_2+\eps_1)p_1^2p_2^2
            -(\eps_2\eps_3+\eps_1\eps_2)p_2^2
            +\eps_1p_1^4
            \\&\quad
            -(\eps_1\eps_3+\eps_1\eps_2)p_1^2
            +\eps_1\eps_2\eps_3
            .
        \end{split}
    \end{equation}
    By positive definiteness $P_\eps(0,0,0)=\eps_1\eps_2\eps_3>0$ so the origin does not lie on the Fresnel surface.
    
    A point $(p_1,p_2,p_3)\in R(\Sigma_{\eps})$ is characterized among points of $\Sigma_{\eps}$ by the property that the vectors $(p_1,p_2,p_3)$ and $\nabla P_\eps(p_1,p_2,p_3)$ are linearly dependent.
    This is equivalent to the vanishing of all $2\times2$ minors of the $2\times 3$ matrix with rows $(p_1,p_2,p_3)$ and $\nabla P_\eps(p_1,p_2,p_3)$.
    These minors are
    \begin{align}
        \label{eq:eps-diag-minor1}
        m_1
        &:=
        (\eps_2-\eps_3)p_2p_3(p_1^2 + p_2^2 + p_3^2 - \eps_1), \\
        \label{eq:eps-diag-minor2}
        m_2
        &:=
        (\eps_1-\eps_3)p_1p_3(p_1^2 + p_2^2 + p_3^2 - \eps_2), \\
        \label{eq:eps-diag-minor3}
        \text{and } m_3
        &:=
        (\eps_1-\eps_2)p_1p_2(p_1^2 + p_2^2 + p_3^2 - \eps_3)
        .
    \end{align}
    There are two cases where all these minors vanish:
    \begin{enumerate}
        \item
        One of the three coordinates vanishes and the rest are on a circle defined by its eigenvalue.
        For example, $p_3=0$ and $p_1^2 + p_2^2 = \eps_3$.
        \smallskip
    
        \item
        Two of the three coordinates $(p_1,p_2,p_3)$ vanish.
    \end{enumerate}
    
    In the first case, substituting $\eps_3=p_1^2 + p_2^2$ and $p_3=0$ into the Fresnel polynomial makes it vanish.
    Therefore, all the points on this circle are contained in $\Sigma_{\eps}$.
    This produces the three claimed circles.
    
    In the second case, assume without loss of generality that $p_2=p_3=0$.
    Then $P_\eps(p_1,0,0)=\eps_1(p_1^2-\eps_2)(p_1^2-\eps_3)$ and the roots are clearly $p_1=\pm\sqrt{\eps_2}$ and $p_1=\pm\sqrt{\eps_3}$.
    The $12$ points produced like this are exactly the intersection points of the three circles with the coordinate axes, so all of $R(\Sigma_{\eps})$ is covered by the first case.
\end{proof}

\begin{lemma}
    \label{lma:eps-2-eigenvalues}
    If $\eps$ has two distinct eigenvalues, then $R(\Sigma_{\eps})$ consists of a sphere whose radius is the square root of the repeated eigenvalue, and a circle in the $2$-dimensional eigenspace whose radius is the square root of the simple eigenvalue.
\end{lemma}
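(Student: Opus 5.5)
We follow the proof of Lemma~\ref{lma:eps-3-eigenvalues}. Since the statement is rotation invariant, we first rotate coordinates so that $\eps=\diag(\eps_1,\eps_2,\eps_2)$ with $\eps_1\neq\eps_2$. The repeated eigenvalue $\eps_2$ then has eigenspace $\{p_1=0\}$.

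Next we use the factorization~\eqref{eq:repeated-eigenvalue-factorization}, $P=(s-\eps_2)\,Q$, where
\[
Q=\eps_1p_1^2+\eps_2(p_2^2+p_3^2)-\eps_1\eps_2 .
\]
Hence $\Sigma_\eps$ is the union of the sphere $S=\{s=\eps_2\}$ and the ellipsoid $E=\{Q=0\}$; the latter is a spheroid with semi-axis $\sqrt{\eps_2}$ along $p_1$ and $\sqrt{\eps_1}$ in the $(p_2,p_3)$-plane. To find $R(\Sigma_\eps)$ we have two options. We can redo the minor computation: with $\eps_2=\eps_3$ the minor $m_1$ vanishes identically, while $m_2$ and $m_3$ become $(\eps_1-\eps_2)p_1p_j(s-\eps_2)$ for $j=3,2$. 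Alternatively, and more transparently, we can argue geometrically with the two components.

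On $S$, the normal $\nabla s=2p$ is always radial, and at points of $S\cap E$ the polynomial $P$ is singular. So all of $S$ lies in $R(\Sigma_\eps)$, which gives the sphere of radius $\sqrt{\eps_2}$, the square root of the repeated eigenvalue.

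On $E\setminus S$, the surface is smooth with normal $\nabla Q=2(\eps_1p_1,\eps_2p_2,\eps_2p_3)$. This normal is parallel to $p$ exactly when $p_1=0$ or $p_2=p_3=0$. If $p_1=0$, the equation $Q=0$ gives $p_2^2+p_3^2=\eps_1$, which is a circle of radius $\sqrt{\eps_1}$ in the eigenspace $\{p_1=0\}$ of $\eps_2$. If $p_2=p_3=0$, then $p_1^2=\eps_2$, so these points already lie on $S$.

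It then remains to check the bookkeeping. The singular points of $\Sigma_\eps$ are the points where the two components meet, together with any singular points of the components themselves. The sphere and the ellipsoid are both smooth: their gradients vanish only at the origin, which lies on neither surface. Their intersection $S\cap E$ is the pair of poles $(\pm\sqrt{\eps_2},0,0)$, since subtracting $\eps_2 s$ from $Q$ gives $(\eps_1-\eps_2)(p_1^2-\eps_2)=0$. These poles lie on $S$, so they add nothing new. The circle is disjoint from the sphere because $\eps_1\neq\eps_2$. Therefore $R(\Sigma_\eps)$ is exactly the sphere of radius $\sqrt{\eps_2}$ together with the circle of radius $\sqrt{\eps_1}$ in the $2$-dimensional eigenspace.

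The main subtlety is the interpretation of $R$ on the reducible surface, namely treating all points of $S$, including those not singular for $P$, as radial. This is immediate because the normal to $S$ at $p$ is $p$ itself.
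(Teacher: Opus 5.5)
Your proof is correct and is essentially the paper's argument. The paper solves $m_2=m_3=0$ (cases $p_1=0$, $p_2=p_3=0$, $s=\eps_2$) and uses the factorization to put the whole sphere inside $\Sigma_\eps$. Your component-wise analysis is the same computation organized differently: the factor $s-\eps_2$ in $m_2,m_3$ picks out the sphere, and the factor $p_1p_j$ is your radiality condition on the spheroid. One cosmetic slip: on $S$ the identity you want is $Q=Q-\eps_2(s-\eps_2)=(\eps_1-\eps_2)(p_1^2-\eps_2)$, not $Q-\eps_2 s$, though your conclusion that $S\cap E$ is the two poles is unaffected.
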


\begin{proof}
    The setting is very similar to the one in the proof of lemma~\ref{lma:eps-3-eigenvalues}:
    The permittivity can be taken to be as in~\eqref{eq:eps-diag-matrix}, the Fresnel polynomial is still given by~\eqref{eq:eps-diag-polynomial} and the characterization of radial points in terms of the minors in~\eqref{eq:eps-diag-minor1}, \eqref{eq:eps-diag-minor2}, and~\eqref{eq:eps-diag-minor3} is still valid.
    The difference is that, without loss of generality, $\eps_1\neq\eps_2=\eps_3$.

    This change implies that $m_1=0$ at all points.
    Solving $m_2=m_3=0$ splits into three cases:
    \begin{enumerate}
        \item
        $p_1=0$.
        \smallskip
        
        \item
        $p_3=p_2=0$.
        \item
        \smallskip
        
        $p_1^2+p_2^2+p_3^2=\eps_2$.
    \end{enumerate}
    
    In the first case the Fresnel polynomial becomes $\eps_2(p_3^2+p_2^2-\eps_1)(p_3^2+p_2^2-\eps_2)$, so the vanishing set contains two circles in the $p_2p_3$-plane with radii $\sqrt{\eps_1}$ and $\sqrt{\eps_2}$.
    
    In the second case the Fresnel polynomial becomes $\eps_1(p_1^2-\eps_2)^2$, yielding the points $(\pm\sqrt{\eps_2},0,0)$.
    
    All points described in the third case are contained in $\Sigma_{\eps}$ because
    \begin{equation}
        P_\eps(p_1,p_2,p_3)
        =
        (p_3^2+p_2^2+p_1^2-\eps_2)
        [\eps_2(p_2^2+p_3^2)+\eps_1p_1^2-\eps_1\eps_2]
        .
    \end{equation}
    Therefore $R(\Sigma_{\eps})$ consists of a sphere of radius $\sqrt{\eps_2}$ and a circle in the $p_2p_3$-plane of radius $\sqrt{\eps_1}$, both centered at the origin.
\end{proof}

\begin{lemma}
    \label{lma:eps-1-eigenvalue}
    If $\eps$ has only one eigenvalue, then $R(\Sigma_{\eps})=\Sigma_{\eps}$ is a single sphere whose radius is the square root of the sole eigenvalue.
\end{lemma}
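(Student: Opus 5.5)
Diagonalize first: since $\eps$ has a single eigenvalue, say $\eps_1$, and it is symmetric, we have $\eps=\eps_1 I$ outright, with no rotation needed. From the proof of Theorem~\ref{thm:reducibility-characterization} (or directly from \eqref{eq:fresnel-diagonal} with $q=\eps_1 s$), the Fresnel polynomial factors as
\[
    P_{\eps_1 I,I}=\eps_1(s-\eps_1)^2,\qquad s=p_1^2+p_2^2+p_3^2 .
\]
Since $\eps_1>0$, the real zero set is therefore $\Sigma_{\eps}=\{s=\eps_1\}$, the sphere of radius $\sqrt{\eps_1}$ centered at the origin.

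Next, show that every point of $\Sigma_\eps$ lies in $R(\Sigma_\eps)$. There are two routes, and I would record both. For the polynomial, $\nabla P=2\eps_1(s-\eps_1)\cdot 2p$ vanishes identically on $\Sigma_\eps$, because $P$ is a square. So every point is a singular point of $\V(P)$, and the minors $m_1,m_2,m_3$ vanish trivially, consistent with their coefficients $\eps_i-\eps_j$ all being zero. If instead "smooth" is read for the set $S=\Sigma_\eps$ as a hypersurface, then $S$ is a round sphere centered at $0$. Its normal at $p$ is parallel to $p$, so $p$ and the normal are linearly dependent at every point. Either way, $R(\Sigma_\eps)=\Sigma_\eps$.

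The only delicate point is this definitional ambiguity: whether $R$ refers to singularities of the zero set or of the defining polynomial. I would note that both readings give the same conclusion, which also matches the sphere component found in Lemma~\ref{lma:eps-2-eigenvalues}. Otherwise the argument is a direct computation.
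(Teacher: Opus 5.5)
Your proposal is correct and follows essentially the same route as the paper. The paper also writes $P_{\eps_1 I,I}=\eps_1(s-\eps_1)^2$, observes that the zero set is the sphere of radius $\sqrt{\eps_1}$ on which every point is radial, and notes that the conclusion still holds if $\Sigma_\eps$ is read scheme-theoretically so that every point is singular; this is exactly your two-reading remark.
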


\begin{proof}
    With $\eps=\eps_1I$, we have
    \begin{equation}
        P_\eps(p_1,p_2,p_3)
        =
        \eps_1(p_3^2+p_2^2+p_1^2-\eps_1)^2
        .
    \end{equation}
    The Fresnel surface is a sphere and all points on it are radial. 
    Even if we were to construe $\Sigma_{\eps}$ as a scheme instead of a variety so that all points are singular, the claim would still remain valid.
\end{proof}

\subsection{Uniqueness Results}

\begin{proof}[Proof of Theorem~\ref{thm:eps-uniqueness}]
    The assumption $\Sigma_{\eps_1}=\Sigma_{\eps_2}$ implies that $R(\Sigma_{\eps_1}) = R(\Sigma_{\eps_2})$.
    We will show that this implies $\eps_1=\eps_2$.
    
    For each $\eps_i$, one of the cases of Lemmas~\ref{lma:eps-3-eigenvalues}, \ref{lma:eps-2-eigenvalues}, and~\ref{lma:eps-1-eigenvalue} must hold.
    The geometries of the radial sets $R(\Sigma_{\eps_i})$ are different in each case, so the degeneracies of the spectra of $\eps_i$ are determined by the radial points of the corresponding Fresnel surface.
    In each case the set $R(\Sigma_{\eps_i})$ also determines the eigenvalues and the eigenspaces, so $\eps_i$ is indeed fully determined by $\Sigma_{\eps_i}$, and we have indeed that $\eps_1=\eps_2$.
\end{proof}

\begin{proof}[Proof of Corollary~\ref{thm:eps-uniqueness-subset}]
    By Theorem~\ref{thm:reducibility-characterization} the Fresnel polynomial is irreducible over $\C$ under the assumptions. 
    A nonempty Euclidean open subset of the smooth real locus is Zariski dense in the complex surface $\V(P_{\eps,I})_\C$.
    Therefore, the Zariski closure of the Euclidean open set is the whole Fresnel surface.
    The claim then follows from Theorem~\ref{thm:eps-uniqueness}.
\end{proof}

\section{The Non-Magnetoelectric Case I: Irreducibility Results}
\label{sec:nonmagnetoelectric_irreducibility}

We (re-)prove that, generically, Fresnel polynomials are irreducible over $\C$. 
As we noted before, this result already follows from their connection to Kummer surfaces, but the methods below are of an entirely different character.

Let $A$ be a finitely generated $\mathbb{R}$-algebra and let $R = A[p_0, p_1, p_2, p_3]$ be a polynomial ring in $4$ variables with coefficients in $A$. 
The \emph{homogenized Fresnel polynomial} $\widetilde{P}_{\eps, \mu} \in R$ is obtained by setting
\begin{equation}
    \label{eq:homogenized_fresnel_polynomial}
    \widetilde{P}_{\eps, \mu}(p_0,p_1,p_2,p_3) := p_{0}^{-2}\det(C_{p}-p_{0} M) = p_{0}^4 \,P_{\eps, \mu}\left(\frac{p_1}{p_0},\frac{p_2}{p_0},\frac{p_3}{p_0}\right).
\end{equation}

Specialize to the case $A = \mathbb{R}[\eps_{1}, \ldots, \eps_{6}, \mu_{1}, \ldots, \mu_{6}]$, where the $\eps_{i}, \mu_{i}$ are indeterminates. 
The ring $A$ is a free $\mathbb{R}$-algebra on $12$ generators. 
The homogenized Fresnel polynomial~\eqref{eq:homogenized_fresnel_polynomial} can be viewed as a homogeneous polynomial of degree~$4$ in the graded ring $A[p_0, p_1, p_2, p_3]$, where $A$ itself is a polynomial ring in $12$ variables. 
Its associated affine variety $\mathbb{A}^{12}_\mathbb{R} = \spec A$ is a parameter space for block-diagonal material parameters and hence a family of Fresnel polynomials. 
We are interested in the locus $Y \subset \mathbb{A}^{12}_\mathbb{R}$ where $\det(M) \neq 0$.  
This is an irreducible dense open subset, being the complement of the hypersurface $\mathbb{V}(\det(M)) \subset \mathbb{A}^{12}_\mathbb{R}$, which parametrizes nondegenerate block-diagonal material matrices. 

Let $H_d = \textrm{H}^0\left(\PP^3,\mathscr{O}_{\PP^3}(d)\right) = \R[p_0,p_1,p_2,p_3]_d$ be the $\R$-vector space of homogeneous polynomials of degree $d$ in the polynomial ring $\R[p_0,p_1,p_2,p_3]$. 
For $a + b = 4$, multiplication of polynomials gives a map of projective spaces
\begin{align*}
    m_{a,b} \colon \PP(H_a) \times \PP(H_b) &\longrightarrow \PP(H_4) \\
    \left([f],[g]\right) &\longmapsto [fg]
\end{align*}
Since the source is projective, this map is closed.

\begin{theorem}
    \label{thm:irr_locus}
    Let 
    \begin{align*}
        \ev \colon Y &\longrightarrow \PP(H_4) \\
        (\eps,\mu) &\longmapsto \left[ \widetilde P_{\eps,\mu}\right ]
    \end{align*}
    be the evaluation map, taking a nondegenerate block-diagonal material matrix $\diag(\eps,\mu)$ to its homogenized Fresnel polynomial. 
    Then the locus 
    \begin{align*}
        \irr(Y) :&= \{(\eps,\mu) \in Y : \widetilde P_{\eps,\mu} \text{ is geometrically integral}\} \\
        &= \{(\eps,\mu) \in Y : \V(\widetilde P_{\eps,\mu})_\C \subset \PP^3_\C\text{ is integral}\}
    \end{align*}
    is Zariski open.
\end{theorem}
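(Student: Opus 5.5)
The plan is to show that the complement of $\irr(Y)$ is closed, by identifying it with the preimage under $\ev$ of the images of the multiplication maps $m_{a,b}$. Before doing so, we check that $\ev$ is a morphism. The homogenized Fresnel polynomial $\widetilde P_{\eps,\mu}$ has constant term $p_0^4\det(M)$, i.e.\ the coefficient of $p_0^4$ is $\det M=\det(\eps)\det(\mu)$. This is nowhere zero on $Y$, so $\widetilde P_{\eps,\mu}\neq0$ for every point of $Y$. Hence $(\eps,\mu)\mapsto[\widetilde P_{\eps,\mu}]$ is a well-defined morphism $Y\to\PP(H_4)$, given in homogeneous coordinates by the $22$ (polynomial) coefficient functions, which have no common zero on $Y$.

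Next we describe the non-integral locus in $\PP(H_4)$. A nonzero quartic form $F$ over an algebraically closed field fails to be integral exactly when it is reducible or non-reduced. In either case it factors as $F=fg$ with $\deg f=a$, $\deg g=b$, $a+b=4$ and $1\le a\le 3$; a non-reduced form has a repeated factor, so it is in particular reducible. Conversely, any such product is non-integral. Thus the geometric points of $\PP(H_4)$ that are not geometrically integral are precisely the geometric points of
\[
Z:=\bigcup_{a=1}^{3} m_{a,4-a}\bigl(\PP(H_a)\times\PP(H_{4-a})\bigr).
\]
Each $m_{a,b}$ is a morphism of projective $\R$-schemes, so its image is closed, and $Z$ is a closed subset of $\PP(H_4)$ defined over $\R$. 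One point of care: a real point $(\eps,\mu)$ lies in $\irr(Y)$ if and only if the corresponding complex quartic is not in $Z(\C)$. Since the formation of the image commutes with base change to $\C$ (images of proper morphisms are compatible with field extension at the level of points), the condition ``$[\widetilde P_{\eps,\mu}]\in Z$'' correctly detects factorization over $\C$ and not merely over $\R$. I would make this explicit by noting that $m_{a,b}$ is surjective onto its image on $\C$-points after base change.

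Finally, $Y\setminus\irr(Y)=\ev^{-1}(Z)$ is Zariski closed because $\ev$ is continuous, so $\irr(Y)$ is Zariski open. The main obstacle is the real-versus-complex bookkeeping in the second step: one must ensure that the closed set $Z$, constructed over $\R$, captures complex factorizations of real polynomials. The cleanest way to handle this is to work scheme-theoretically, where points of $Y$ include non-real points and ``geometrically integral'' is tested after base change to $\bar{\R}=\C$, and to use that $\PP(H_a)\times\PP(H_b)\to\PP(H_4)$ is proper, so its image is closed and its formation commutes with base change.
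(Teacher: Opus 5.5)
Your proposal is correct and follows the paper's proof: the non-integral locus in $\PP(H_4)$ is the closed union of the images of the proper multiplication maps $m_{a,4-a}$, and $\irr(Y)$ is the preimage of its open complement under the regular map $\ev$. Your extra checks fill in details the paper leaves implicit: the $p_0^4$-coefficient $\det M$ keeps $\ev$ defined on all of $Y$, and set-theoretic images commute with base change to $\C$, so the closed set detects factorizations over $\C$ rather than only over $\R$.
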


\begin{proof}
    A homogeneous quartic polynomial is reducible over an algebraically closed field if and only if its factorization is of type $1 + 3$ or $2+2$. 
    Hence, the reducible locus of quartic polynomials in $\PP(H_4)$ is the union of the scheme-theoretic images $\im(m_{1,3})$ and $\im(m_{2,2})$, which is a closed subset; call its open complement $U$. 
    The map evaluation $\ev$ is regular, so $\ev^{-1}(U)$ is open in $Y$.
\end{proof}

We deduce Theorem~\ref{thm:generic-irreducibility} as a corollary.

\begin{proof}[Proof of Theorem~\ref{thm:generic-irreducibility}]
    Since $Y$ is irreducible, every nonempty open subset is dense. 
    We show that $\irr(Y) \neq \emptyset$ in Example~\ref{ex:irreducible_fresnel_polynomial} by exhibiting a homogenized Fresnel polynomial that is irreducible over $\C$. 
    Since $P_{\eps,\mu}$ is a degree $4$ dehomogenization of $\widetilde P_{\eps,\mu}$, one is irreducible over $\C$ if and only if the other one is.
\end{proof}

\begin{example}
    \label{ex:irreducible_fresnel_polynomial}
    Let
    \begin{equation}
        M =
        \begin{pmatrix}
            2&0&1&0&0&0\\
            0&2&0&0&0&0\\
            1&0&1&0&0&0\\
            0&0&0&2&1&0\\
            0&0&0&1&4&0\\
            0&0&0&0&0&2
       \end{pmatrix}.
    \end{equation}
    The Fresnel polynomial associated to this matrix is
    \begin{equation}
        \begin{split}
            P_{\eps,\mu}(p_1,p_2,p_3)
                &=2p_3^4+4p_1p_3^3+8p_2^2p_3^2+2p_1p_2p_3^2+6p_1^2p_3^2-16p_3^2+8p_1p_2^2p_3\\
                &+4p_1^2p_2p_3-8p_2p_3+4p_1^3p_3-16p_1p_3+8p_2^4+4p_1p_2^3+12p_1^2p_2^2\\
                &-46p_2^2+4p_1^3p_2-16p_1p_2+4p_1^4-23p_1^2+28.
        \end{split}
    \end{equation}
    A \textsc{Magma} calculation shows that the homogenization $\widetilde P_{\eps,\mu}$ of this polynomial is irreducible over the finite field $\mathbb{F}_{5^4}$ with $5^4$ elements; applying~\cite[Lemma~19]{tonyjoonas2023} shows this calculation suffices to establish irreducibility over $\C$. 
    The accompanying file {\tt 1-exampleIrreducibleFresnel.m} in~\cite{this-paper} contains \textsc{magma} code verifying the claims in this example.
\end{example}

\begin{remark}
    Theorem~\ref{thm:generic-irreducibility} can also be proved by using the methods of~\cite{tonyjoonas2023}.  
    Indeed, this was our original approach to the theorem, until an audit of a complete draft of the paper by GPT5.6-Sol Pro suggested the above simpler proof.
\end{remark}

\section{The Non-Magnetoelectric Case II: Generic Uniqueness}
\label{sec:nonmagnetoelectric_generic_uniqueness}

In this section, we prove Theorem~\ref{thm:generic-uniqueness}. 
It requires working knowledge of modern algebraic geometry~\cite{Hartshorne,vakil2025} and geometric invariant theory~\cite{Mukai}.

The Fresnel polynomial now depends on $\eps$ and $\mu$. 
Our first task is to describe a scheme (or variety) structure on $X$, parametrizing Fresnel surfaces, that incorporates the gauge freedom~\eqref{eq:gauge}. 
This requires attempting to take a quotient of an algebraic variety by a group action. 
In general, the underlying topological space of such a quotient, in the algebraic category, need not coincide with the set-theoretic quotient of the action, because the action's orbits might have intersecting closures, which must be identified in the algebraic category. 
However, we construct a geometric invariant quotient $\mathcal{X}^\circ$ such that $X$ injects into the set of real-valued points $\mathcal{X}^\circ(\R)$ and is Zariski dense in $\mathcal{X}^\circ$.

\subsection{The Algebraic Quotient}
\label{sec:algebraic_quotient}

Let $E := \Sym_3(\R)\times \Sym_3(\R)$ be the $12$-dimensional $\R$-vector space whose elements are pairs $(\eps,\mu)$ of real symmetric $3\times 3$ matrices, and let 
\[
    V := \mathbb{A}(E) = \spec\left(\Sym_\R(E^\vee)\right) \simeq \mathbb{A}^{12}_\R
\]
be the affine variety associated with $E$; we have $V(\R) = E$. 
Let $C_2 = \langle\tau\rangle$ be a group of order $2$. 
The algebraic group $G := \mathbb G_{m,\R}\rtimes C_2$ acts on $V$ by
\[
    t\cdot(\varepsilon,\mu)=(t\varepsilon,t^{-1}\mu) \text{ for }t\in \mathbb{G}_{m,\R},
    \quad\text{and}\quad
    \tau\cdot(\varepsilon,\mu)=(\mu,\varepsilon).
\]
The semidirect product structure is determined by $\tau t\tau = t^{-1}$. 
Note that $G(\R) = \R^\times \rtimes C_2$, and the intersection of a $G(\R)$-orbit with the physical locus consists of one physical gauge class.

The group~$G$ is reductive, so the invariant ring $\R[V]^G$ is finitely generated, and the affine good quotient
\[
    \mathcal{X} := V \sslash G := \spec \R[V]^G
\]
exists (see, e.g.,~\cite{Mukai}). 
Let $\Delta(\eps,\mu) = \det(\eps)\det(\mu)$; this is a $G$-invariant function on $V$, so we can form the quotient of the open subset $V^\circ := D(\Delta) = \{\Delta \neq 0\} \subset V$ by $G$ to obtain an open subset 
\[
    \mathcal X^\circ := V^\circ \sslash G \subset \mathcal X.
\]
This quotient is equipped with a natural quotient morphism $\pi_G \colon V^\circ \to \mathcal X^\circ$ of schemes.

We claim that $\pi_G \colon V^\circ \to \mathcal X^\circ$ is a geometric quotient, i.e., $\mathcal X^\circ$ is a space whose geometric points are precisely the $G$-orbits of $V^\circ$. 
To see this, it suffices to show that for any algebraically closed field extension $K/\R$, the $G_K$-orbits in $V^\circ_K$ are closed. 
We have
\[
    V \simeq \spec \R[\eps_1,\dots,\eps_6,\mu_1,\dots,\mu_6];
\]
let $P := (e_1,\dots,e_6,m_1,\dots,m_6) \in V^\circ(K)$ be a $K$-valued point. 
There are indices $i$ and $j$ such that $e_i \neq 0$ and $m_j \neq 0$. 
The $\mathbb{G}_{m}(K)$-orbit of $P$ is the vanishing set
\[
    \V(\eps_i\mu_j - e_im_j) \cap \V(e_i\eps_k - e_k\eps_i, m_j\mu_\ell - m_\ell\mu_j ; 1\leq k, \ell \leq 6) \subset V^\circ_K,
\]
which is manifestly closed. 
The full $G$-orbit of $P$ is the union of its $\mathbb{G}_{m}(K)$-orbit and its $\tau$-translate, so it is a finite union of closed sets and hence closed.

Let
\[
    \mathcal P :=\{(\eps,\mu)\in V(\R):\eps\text{ and }\mu\text{ are positive definite}\}.
\]
Then $\mathcal P\subset V^\circ(\R)$ is a nonempty Euclidean open subset, and hence a Zariski dense subset.

\begin{lemma}
    \label{lem:physical-orbits-in-quotient}
    Let $(\eps,\mu),(\eps',\mu')\in\mathcal P$.  
    If $\pi_G(\eps,\mu)=\pi_G(\eps',\mu')$, then there is a $t \in \R_{>0}$ such that
    \[
        (\eps',\mu')=(t\eps,t^{-1}\mu)
        \qquad\text{or}\qquad
        (\eps',\mu')=(t\mu,t^{-1}\eps).
    \]
    In particular, the physical gauge classes $X$ inject into $\mathcal X^\circ(\R)$, and their image is Zariski dense.
\end{lemma}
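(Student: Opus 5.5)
The plan is to use the fact, established just above, that $\pi_G\colon V^\circ\to\mathcal X^\circ$ is a geometric quotient. Extending scalars to $K=\C$, the geometric points of $\mathcal X^\circ$ are exactly the $G_\C$-orbits in $V^\circ_\C$. So if $\pi_G(\eps,\mu)=\pi_G(\eps',\mu')$, then $(\eps',\mu')$ lies in the $G(\C)$-orbit of $(\eps,\mu)$. Concretely, there is $t\in\C^\times$ with either $(\eps',\mu')=(t\eps,t^{-1}\mu)$ or $(\eps',\mu')=(t\mu,t^{-1}\eps)$. The rest of the argument then consists of three steps: pinning $t$ down to $\R_{>0}$, deducing injectivity, and proving density.

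\textbf{Showing $t\in\R_{>0}$.} Since $\eps$ is positive definite, it has some nonzero real entry; for instance, a diagonal entry $\eps_{11}>0$.
\begin{itemize}
\item In the first case, $\eps'_{11}=t\eps_{11}$ with both sides real and positive, so $t=\eps'_{11}/\eps_{11}>0$.
\item In the second case, we use a positive diagonal entry of $\mu$ in the same way: $\eps'_{11}=t\mu_{11}$ with both sides positive gives $t>0$.
\end{itemize}
This yields the displayed dichotomy. The subtle point is that one must really use $\C$-points (or points over any algebraically closed field) to invoke the geometric-quotient property, because real points of the quotient may come from orbits not defined over $\R$. Positivity rules this out.

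\textbf{Injectivity.} The physical gauge class of $(\eps,\mu)$ is the orbit under $\R_{>0}\rtimes\Z/2\Z$, so the dichotomy above is exactly equality of gauge classes in $X$. Hence the induced map $X\to\mathcal X^\circ(\R)$ is well defined, since $\pi_G$ is $G$-invariant, and injective.

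\textbf{Density.} $\mathcal P$ is Zariski dense in $V^\circ$, because a nonempty Euclidean open subset of $\R^{12}$ is Zariski dense in $\mathbb A^{12}_\R$. The quotient map $\pi_G$ is a dominant morphism; indeed it is surjective, being a good quotient. The image of a dense set under a continuous surjection is dense. Therefore $\pi_G(\mathcal P)$, which is the image of $X$, is Zariski dense in $\mathcal X^\circ$.

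I expect the main obstacle to be justifying carefully that equality of images under $\pi_G$ forces the points to lie in one geometric orbit. This follows from the geometric-quotient property established before the lemma, applied after base change to $\C$, which is compatible because good quotients commute with flat base change in characteristic zero.
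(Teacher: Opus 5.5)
Your proposal is correct and follows the paper's proof essentially step for step. You use the geometric-quotient property after base change to $\C$ to get $t\in\C^\times$, then positivity to force $t>0$, and finally Zariski density of $\mathcal P$ together with surjectivity of $\pi_G$ for the density claim. The only difference is cosmetic: the paper first gets $t\in\R^\times$ from realness and then $t>0$ from positive definiteness, whereas you obtain $t>0$ in one step by comparing positive diagonal entries.
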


\begin{proof}
    By the discussion preceding the lemma, equality of points in $\mathcal X^\circ$ means that $(\eps,\mu)$ and $(\eps',\mu')$ belong to the same geometric $G$-orbit.  
    Thus, one of the two displayed relations holds with $t\in\C^\times$.  
    Since $\eps$, $\mu$, $\eps'$, and $\mu'$ all give nonzero real matrices, we must have $t\in\R^\times$. 
    Finally, positive definiteness forces $t>0$.  
    The density assertion follows from the Zariski density of $\mathcal P$ in $V$ and the surjectivity of $\pi_G$.
\end{proof}

Let $W \cong \mathbb{R}^{22}$ be the $\R$-vector space whose vectors record the coefficients of monomials that can occur in a Fresnel polynomial. 
Since the constant term of $P_{\eps,\mu}$ is $\Delta(\eps,\mu)$, any Fresnel polynomial arising from a point on $V^\circ$ is not the zero polynomial. 
This allows us to define a map
\begin{align*}
    \widetilde F \colon V^\circ &\longrightarrow \PP(W) \\
    (\eps,\mu) &\longmapsto [\widetilde P_{\eps,\mu}].
\end{align*}
The composition $q\circ F$ from~\S\ref{sec:main-results} can be understood as the map $\widetilde{F}$ on the $\R$-valued points of $V^\circ$. 
Proposition~\ref{prop:fresnel_gauge_freedom} shows that $\widetilde F$ is $G$-invariant.  
Hence it factors uniquely through the quotient:
\begin{equation}
    \label{eq:quotient-Fresnel-map}
    \begin{tikzcd}
        V^\circ \arrow[r,"\widetilde F"] \arrow[d,"\pi_G"'] & \PP(W) \\
        \mathcal X^\circ \arrow[ur,"\phi"'] & {}
    \end{tikzcd}
\end{equation}
Thus $\widetilde F=\phi\circ\pi_G$.  
Let $\mathcal Y :=\overline{\widetilde F(V^\circ)} \subset\PP(W)$ with its reduced induced structure.  

We obtain a dominant morphism $\phi\colon\mathcal X^\circ\to\mathcal Y$. 
Our goal is to show that this map is a birational morphism of algebraic varieties, and hence generically injective. 
This will imply that a real Fresnel polynomial, up to a global scaling, generically determines the parameters of the electromagnetic material that gave rise to it, up to the gauge~\eqref{eq:gauge}. 
However, working directly with $\mathcal X^\circ$ is computationally challenging, so in the next section, we first find a suitable replacement by `slicing' $V^\circ$ in just the right way as to ``kill'' the $\mathbb{G}_{m,\R}$-action.

\subsection{A $\mathbb{G}_{m,\R}$-slice of $V^\circ$}
\label{sec:slicing}

Let $D(\eps_1) \subset V \simeq \mathbb{A}^{12}_\R$ be the open subset where $\eps_1 \neq 0$, and set $V_1^{\circ} := V^\circ \cap D(\eps_1)$. 
This is a $\mathbb{G}_{m,\R}$-invariant subset of $V^\circ$ containing the positive-definite locus $\mathcal P$. 
Let $S := V^\circ \cap \V(\eps_1 - 1)$ be the ``slice'' of $V_1^\circ$ where $\eps_1 = 1$. 
Every $\mathbb{G}_{m,\R}$-orbit of $V_1^\circ$ contains a unique representative in $S$. 
More precisely, the map
\begin{align*}
    \mathbb{G}_{m,\R} \times S &\longrightarrow V_1^\circ \\
    \left(t,(\eps,\mu)\right) &\longmapsto (t\eps,t^{-1}\mu)
\end{align*}
is an isomorphism; its inverse is $(\eps,\mu) \mapsto \left(\eps_1,(\eps/\eps_1,\eps_1\mu)\right)$.

Unfortunately, the swap $\tau$ does not preserve $S$. 
However, on the dense open subset $S \cap D(\mu_1)$ of $S$, swapping followed by returning to the slice gives a rational involution
\begin{align*}
    j\colon S &\dashrightarrow S \\
    (\eps,\mu) &\longmapsto \left(\frac{\mu}{\mu_1},\mu_1\eps\right)
\end{align*}
whose fixed locus $\{(\eps,\mu) \in S : \mu = \mu_1\eps\}$ is a proper closed subset of $S\cap D(\mu_1)$. 
It follows that a general $G$-orbit of $V^\circ$ meets the slice $S$ in only two distinct points: the $\mathbb{G}_{m,\R}$-orbit of $P \in S$ meets $S$ only at $P$, while the $\mathbb{G}_{m,\R}$-orbit of $\tau(P)$ meets $S$ only at $j(P)$.

The map $j$ pays mathematical dividends: it allows us to make the quotient $\mathcal X^\circ$ in two steps: we can take $\mathbb{G}_{m,\R}$-invariants first, followed by taking $\langle j\rangle$-invariants. 
This is made precise in Theorem~\ref{thm:Fresnel_polynomial_map_birational}, where we show the equality of function fields 
\[
    \R(\mathcal X^\circ)
    =\R(V^\circ)^G
    =\bigl(\R(V^\circ)^{\mathbb{G}_{m,\R}}\bigr)^{\langle\tau\rangle}
    \simeq\R(S)^{\langle j\rangle}
    =\R(\mathcal Y).
\]

\subsection{Projective Compactification and Blowing-up}
\label{sec:compactification_and_blow-up}

In order to study the fibres of $\widetilde{F}$ algebraically, we first extend this map to a projective setting. Write the coordinates on $\PP^6\times \PP^6$ as
\[
    ([\eps_1,\ldots,\eps_6,r],[\mu_1,\ldots,\mu_6,s]).
\]
The affine chart where $r = s = 1$ is naturally isomorphic to $V$. 
Bi-homogenising the coefficients of the Fresnel polynomial, we obtain a rational map
\[
    g\colon \PP^6\times \PP^6\dashrightarrow \PP(W).
\]
More precisely, if a coefficient of the affine Fresnel polynomial has bi-degree $(a,b)$ in $(\eps,\mu)$, then it is multiplied by the appropriate powers of $r$ and $s$ so that it becomes bihomogeneous of degree $(3,3)$.
For instance, the term $\eps_1\mu_1$ is replaced by $\eps_1 r^2\mu_1s^2$. 
The restriction of $g$ to the affine chart $r = s = 1$ agrees with $\widetilde F$.

The projective closure of the slice $S$ is the subvariety
\[
    \Pi=\{\eps_1=r\}\times\PP^6 \subset\PP^6\times\PP^6.
\]
We use it to perform a blow-up calculation. 
Let $B\subset \Pi$ be the scheme-theoretic base locus of $g|_\Pi$, and let 
\[
    \beta\colon X' := \text{Bl}_B(\Pi) \to \Pi
\]
be the blow-up morphism. 
Concretely, $X'$ is the projective closure of the graph of $g|_\Pi$, and $\beta$ resolves $g|_\Pi$ in the sense that there exists a morphism $h\colon X' \to \mathcal Y$ fitting into the commuting triangle
\begin{equation}
    \begin{tikzcd}
        X' \arrow[swap]{d}{\beta} \arrow{dr}{h} & {} \\
        \Pi \arrow[dashed,swap]{r}{g|_\Pi} & \mathcal{Y}
    \end{tikzcd}.
\end{equation}
The variety $X'$ is integral and projective, and the morphism $h$ is proper and surjective: for this last property, note that $h(X')$ is closed, it contains $\widetilde F(S)$ and $\widetilde F(S)$ is dense in $\mathcal Y$.

The key to proving birationality of the map $\phi\colon \mathcal X^\circ \to \mathcal Y$ is showing there is a single point $y_0 \in \mathcal Y(\R)$ such that $h^{-1}(y_0)$ consists of two distinct reduced $\R$-points. 
We record the existence of such a point in the following example.

\begin{example}
    \label{ex:special-fibre}
    Fix the ordered monomial basis
    \[
        \begin{aligned}
            \mathcal B=\bigl(&
            p_1^4,p_1^3p_2,p_1^3p_3,p_1^2p_2^2,p_1^2p_2p_3,p_1^2p_3^2,p_0^2p_1^2,p_1p_2^3,p_1p_2^2p_3,p_1p_2p_3^2, \\
            &p_0^2p_1p_2, p_1p_3^3,p_0^2p_1p_3,p_2^4,p_2^3p_3,p_2^2p_3^2,p_0^2p_2^2,p_2p_3^3,p_0^2p_2p_3,p_3^4,p_0^2p_3^2,p_0^4\bigr)
        \end{aligned}
    \]
    of $W$, and let
    \[
        \begin{aligned}
            y_0=\bigl[  &200:140:120:700:100:1000:-3181:280:240:420:-1000:\\
                        &360:-870:600:200:1700:-5656:300:-696:1200:-8109:12528
                \bigr]\in\PP(W).
        \end{aligned}
    \]
    Equivalently, $y_0=[Q_0]$, where
    \[
        \begin{aligned}
        Q_0={}  & 200p_1^4+140p_1^3p_2+120p_1^3p_3+700p_1^2p_2^2+100p_1^2p_2p_3+1000p_1^2p_3^2+280p_1p_2^3+240p_1p_2^2p_3\\
                &+420p_1p_2p_3^2+360p_1p_3^3+600p_2^4+200p_2^3p_3+1700p_2^2p_3^2+300p_2p_3^3+1200p_3^4-3181p_0^2p_1^2\\
                &-1000p_0^2p_1p_2-870p_0^2p_1p_3-5656p_0^2p_2^2-696p_0^2p_2p_3-8109p_0^2p_3^2+12528p_0^4.
        \end{aligned}
    \]
    The scheme-theoretic fibre $h^{-1}(y_0)$ is reduced of length two.  
    Its support lies in the affine slice $S$ and consists of the two rational points
    \[
        x_1=(\eps_1,\mu_1),
        \qquad\text{and}\qquad
        x_2=(\eps_2,\mu_2),
    \]
    where
    \[
        \eps_1=
        \begin{pmatrix}
            1&7/20&3/10\\
            7/20&3/2&1/4\\
            3/10&1/4&2
        \end{pmatrix},
        \qquad
        \mu_1=
        \begin{pmatrix}
            2&0&0\\
            0&4&0\\
            0&0&6
        \end{pmatrix},
    \]
    and
    \[
        \eps_2=
        \begin{pmatrix}
            1&0&0\\
            0&2&0\\
            0&0&3
        \end{pmatrix},
        \qquad
        \mu_2=
        \begin{pmatrix}
            2&7/10&3/5\\
            7/10&3&1/2\\
            3/5&1/2&4
        \end{pmatrix}.
    \]
    In other words, we also have $\widetilde F(x_1) = \widetilde F(x_2) = y_0$. 
    Moreover, $\eps_2=\mu_1/2$, and $\mu_2=2\eps_1$, so $x_2=j(x_1)$, and all four matrices are positive definite. 
    The accompanying file {\tt 2-biGradedBlowup.magma} in~\cite{this-paper} contains \textsc{magma} code verifying the claims in this example.
\end{example}

\begin{theorem}[Birationality of the Fresnel polynomial map]
    \label{thm:Fresnel_polynomial_map_birational}
    The morphism $\widetilde F|_S \to \mathcal Y$ is generically finite of degree $2$, and at the level of function fields, we have
    \[
        \R(\mathcal Y) = \R(S)^{\langle j\rangle} = \R(\mathcal X^\circ).
    \]
    In particular, the morphism $\phi \colon \mathcal X^\circ \to \mathcal Y$ is birational.
\end{theorem}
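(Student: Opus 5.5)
The argument has three steps: degree, function fields, and finally birationality of $\phi$.

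\textbf{Degree of $\widetilde F|_S$.} First I check dimensions. The slice $S$ is an open subset of an affine hyperplane in $V$, so $\dim S = 11$. Next I show $\dim \mathcal Y = 11$. The fibre $h^{-1}(y_0)$ of Example~\ref{ex:special-fibre} is finite, and $h\colon X'\to\mathcal Y$ is proper and surjective. Upper semicontinuity of fibre dimension therefore gives that the generic fibre of $h$ is finite, so $\dim\mathcal Y=\dim X'=11$ and $h$ is generically finite. Since $X'$ and $\mathcal Y$ are integral, there is a dense open $U\subset\mathcal Y$ over which $h$ is finite and flat, with constant fibre length equal to $[\R(X'):\R(\mathcal Y)]$.

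The main obstacle is that $y_0$ itself need not lie in $U$. To get around this, I use that $h$ is proper, so the locus of $y$ where the fibre is finite is open, and on it the fibre length $y\mapsto \operatorname{length} h^{-1}(y)$ is upper semicontinuous. Combined with finiteness near $y_0$, this gives $\deg h\le 2$. For the reverse inequality I use the involution: $j$ extends to a birational involution of $X'$ commuting with $h$, because $\widetilde F\circ j=\widetilde F$ on $S\cap D(\mu_1)$ by Proposition~\ref{prop:fresnel_gauge_freedom}. Since the fixed locus of $j$ is proper, a general fibre contains the two distinct points $x$ and $j(x)$, so $\deg h\ge2$. Hence $\deg h=2$, and since $\beta$ is birational, $\widetilde F|_S$ is generically finite of degree $2$.

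\textbf{Function fields.} Pullback gives $\R(\mathcal Y)\subseteq\R(S)^{\langle j\rangle}$, by the $j$-invariance just noted. Because $j$ is a nontrivial involution, Artin's theorem gives $[\R(S):\R(S)^{\langle j\rangle}]=2$. Comparing with $[\R(S):\R(\mathcal Y)]=2$ forces $\R(\mathcal Y)=\R(S)^{\langle j\rangle}$.

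For the other equality, the isomorphism $\mathbb G_{m}\times S\cong V_1^\circ$ identifies $\R(V^\circ)^{\mathbb G_m}$ with $\R(S)$. I then check directly that the residual action of $\tau$ on this field is $j^*$: it is swapping followed by renormalizing to the slice. This yields $\R(V^\circ)^G=\R(S)^{\langle j\rangle}$.

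\textbf{Birationality of $\phi$.} By Lemma~\ref{lem:physical-orbits-in-quotient} and the fact that $\pi_G$ is a geometric quotient, $\mathcal X^\circ$ is integral with $\R(\mathcal X^\circ)=\R(V^\circ)^G$. One justification is that $\mathcal X^\circ$ is normal with the invariant ring as its coordinate ring; since rational invariants of the reductive group $G$ with closed generic orbits are quotients of invariants, its fraction field is the field of rational invariants. The map $\phi$ induces $\R(\mathcal Y)\hookrightarrow\R(\mathcal X^\circ)$, and this inclusion is an equality by the previous step. Hence the dominant morphism $\phi$ is birational.
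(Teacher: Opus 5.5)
Your proposal is correct and follows essentially the same route as the paper. You use the finite, length-two special fibre $h^{-1}(y_0)$ together with upper semicontinuity of fibre dimension and of fibre degree for the finite restriction of $h$ to get $[\R(S):\R(\mathcal Y)]\le 2$. The $\widetilde F$-invariance of the involution $j$ then gives the reverse inequality and $\R(\mathcal Y)=\R(S)^{\langle j\rangle}$, and the slice isomorphism $\mathbb G_m\times S\cong V_1^\circ$, with $\tau$ acting as $j$, identifies this field with $\R(V^\circ)^G=\R(\mathcal X^\circ)$.

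Your additional details only spell out what the paper asserts in passing: the dimension count, generic flatness, the geometric two-points-per-fibre version of the lower bound, and the justification of $\R(\mathcal X^\circ)=\R(V^\circ)^G$ via normality and closed generic orbits of the reductive group $G$.
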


\begin{proof}
    The fibre $h^{-1}(y_0)$ in Example~\ref{ex:special-fibre} is zero-dimensional. 
    Therefore, upper semicontinuity of fibre dimension for surjective proper morphisms gives an open subset of $\mathcal Y$ over which $h$ is quasi-finite. 
    The restriction of $h$ to the preimage of this open set is therefore proper and quasi-finite, hence finite. 
    For a finite morphism, the fibre-degree function $y\mapsto \dim_{\kappa(y)} \operatorname{H}^0\big(X'_y,\mathscr{O}_{X'_y}\big)$ is also an upper semicontinuous function. 
    Since the fibre $h^{-1}(y_0)$ has degree $2$, we deduce that at the level of function fields, we have the inequality of degrees
    \[
        [\R(S):\R(\mathcal Y)] = [\R(X'):\R(\mathcal Y)] \leq 2,
    \]
    where the first equality uses the fact that $S$ and $X'$ are birational. 
    See~~\cite[\S5.5]{tonyjoonas2023} for more details of how an argument like this works.

    On the other hand, the nontrivial rational involution $j$ of $S$ satisfies $\widetilde F|_S\circ j=\widetilde F|_S$.  
    It therefore induces a nontrivial automorphism of $\R(S)$ fixing $\R(\mathcal Y)$.  
    Consequently, $[\R(S):\R(\mathcal Y)] \geq 2$, and hence equality holds.  
    Since $\R(\mathcal Y)\subseteq\R(S)^{\langle j\rangle}$ and both subfields have index two in $\R(S)$, we obtain $\R(\mathcal Y)=\R(S)^{\langle j\rangle}$.

    Finally, we have $\R(V^\circ)^{\mathbb{G}_{m,\R}}\simeq\R(S)$. 
    Under this identification, the residual action of $\tau$ is the involution $j$.  
    Because $\pi_G$ is a geometric quotient, pullback identifies the function field of $\mathcal X^\circ$ with the field of $G$-invariant rational functions on $V^\circ$.  
    Therefore,
    \[
        \R(\mathcal X^\circ)
        =\R(V^\circ)^G
        =\bigl(\R(V^\circ)^{\mathbb{G}_{m,\R}}\bigr)^{\langle\tau\rangle}
        \simeq\R(S)^{\langle j\rangle}
        =\R(\mathcal Y).
        \eqno\qed
    \]
    \hideqed
\end{proof}

\begin{remark}
    Mathematically speaking, it would have been more natural to work directly with the map $\phi\colon \mathcal{X}^\circ \to \mathcal Y$ in the proof above, and try to show its general fibre is zero-dimensional of degree $1$. 
    However, to write down $\mathcal{X}^\circ$, one needs a set of coordinate $G$-invariant functions.  
    Let $s_{ij} = \eps_i\mu_j + \eps_j\mu_i$, where $1 \leq i \leq j \leq 6$, and let $S = (s_{ij})$ be a $6\times 6$ symmetric matrix collecting these $G$-invariant functions. 
    One can show that
    \[
        \R[V]^G \simeq \frac{\R[s_{ij} : 1 \leq i \leq j \leq 6]}{I_3(S)},
    \]
    where $I_3(S)$ is the ideal generated by the $3\times 3$ minors of $S$. 
    With this presentation, $\mathcal{X}^\circ$ naturally lives in $\mathbb{A}^{21}$. 
    From here, we would have to take an open subset, homogenize, blow up, and look at a fibre to run the above argument.  
    This blow-up seems to be beyond current computational capabilities.
\end{remark}

\subsection{From Polynomials to Surfaces}
\label{sec:polys_to_surfaces}

\begin{lemma}
    \label{lem:real-surface-determines-quartic}
    Let $F,G\in\R[p_0,p_1,p_2,p_3]$ be nonzero homogeneous polynomials of the same degree.  
    Suppose that $F$ is irreducible over $\C$ and that $\V(F)$ has a smooth real point.  
    If $G$ vanishes on a nonempty Euclidean open subset of the smooth real locus of $\V(F)$, then $G=cF$ for some $c\in\R^\times$.
\end{lemma}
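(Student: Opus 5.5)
The plan is to show that $G$ vanishes on all of $\V(F)_\C$, and then to use irreducibility of $F$ to conclude $F\mid G$. Since $\deg G=\deg F$, this forces $G=cF$ with $c\in\C^\times$, and real coefficients then force $c\in\R^\times$.

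\emph{Step 1: local parametrization.} Let $U$ be the given nonempty Euclidean open subset of the smooth real locus of $\V(F)$, and pick $x\in U$. Because $x$ is a smooth real point, $\nabla F(x)\neq 0$ (here smoothness refers to the reduced hypersurface $\V(F)$, and $F$ is squarefree since it is irreducible over $\C$). Work in an affine chart $p_i\neq 0$ and apply the real implicit function theorem. Near $x$, the real zero set of $F$ is then a real-analytic graph of dimension $2$, parametrized by an open set $\Omega\subset\R^2$, say $p_3=\psi(p_1,p_2)$ after relabeling coordinates. Shrinking if necessary, this graph lies inside $U$, so $G(p_1,p_2,\psi(p_1,p_2))\equiv 0$ on $\Omega$. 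The same implicit function holomorphically extends to a complex neighborhood $\Omega_\C\subset\C^2$, and it parametrizes a Euclidean-open piece $N$ of the smooth complex locus of $\V(F)_\C$ near $x$. The function $G(p_1,p_2,\psi)$ is holomorphic on $\Omega_\C$ and vanishes on the totally real open set $\Omega$, so by the identity theorem it vanishes on $\Omega_\C$. Hence $G$ vanishes on $N$.

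\emph{Step 2: Zariski density.} Because $F$ is irreducible over $\C$, $\V(F)_\C$ is an irreducible complex surface. Its smooth locus is a connected complex manifold, since the smooth locus of an irreducible variety is connected. A polynomial that vanishes on a nonempty open subset $N$ of a connected complex manifold vanishes on the whole manifold by analytic continuation. Since the smooth locus is dense in $\V(F)_\C$, $G$ vanishes on all of $\V(F)_\C$. Alternatively, $\V(F,G)_\C$ is a closed subvariety containing a $2$-dimensional analytic germ, so it has dimension $2$, and by irreducibility it must equal $\V(F)_\C$. I would use this dimension argument because it avoids the connectedness statement.

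\emph{Step 3: divisibility.} By the Nullstellensatz, $G\in\sqrt{(F)}=(F)$, using that $F$ is irreducible and hence $(F)$ is prime. So $G=cF$ with $c\in\C$, and $c$ is a constant because the degrees agree. Moreover $c\neq0$ since $G\neq 0$. Comparing any nonzero real coefficient of $F$ with the corresponding coefficient of $G$ gives $c\in\R$.

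I expect the main obstacle to be Step 1. The delicate point is justifying that a real open piece of the smooth locus is Zariski dense in the complex surface. This is where the hypothesis that the point is smooth, i.e. $\nabla F\neq 0$ rather than merely a smooth point of the real set, is essential. Without it the real locus could be lower dimensional (as with $p_1^2+p_2^2$) and the argument would fail.
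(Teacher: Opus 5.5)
Your proof is correct and takes the same route as the paper: a real neighborhood of a smooth point is Zariski dense in $\V(F)_\C$, so irreducibility gives $F \mid G$, and then a degree count and a comparison of real coefficients make the scalar real. The paper asserts the density step in a single sentence, whereas your Steps 1--2 (implicit function theorem, identity theorem, then the dimension-plus-irreducibility argument) supply the justification for it.
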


\begin{proof}
    A Euclidean neighborhood of a smooth real point in $\V(F)(\R)$ is Zariski dense in the complex hypersurface $\V(F)_{\C}$. 
    Thus $G$ vanishes identically on $\V(F)_{\C}$, so the irreducible polynomial $F$ divides $G$ in $\C[p_0,p_1,p_2,p_3]$.  
    Since $F$ and $G$ have the same degree, they differ by a nonzero complex scalar.  
    Because both have real coefficients, that scalar is real.
\end{proof}

By Theorem~\ref{thm:generic-irreducibility}, there is a nonempty $G$-invariant Zariski-open subset $V^{\mathrm{gi}}\subset V^\circ$ on which homogenized Fresnel polynomials $\widetilde P_{\eps,\mu}$ are geometrically integral.  
Consider the polynomial condition
\[
    \mathfrak d(\eps,\mu) :=\operatorname{Disc}_{t}\bigl(P_{\eps,\mu}(t,0,0)\bigr)\neq0;
\]
it is $G$-invariant because the Fresnel polynomial is $G$-invariant.
This open condition is nonempty: for the material parameter in Example~\ref{ex:irreducible_fresnel_polynomial},
\[
    P_{\eps,\mu}(t,0,0) = 4t^4-23t^2+28 = 4(t^2-4)(t^2-7/4),
\]
which has four distinct roots.  
Set
\begin{equation}
\label{eq:surface-good-locus}
    V^{\mathrm{surf}} := V^{\mathrm{gi}}\cap D(\mathfrak d) \subset V^\circ.
\end{equation}
This is a nonempty $G$-invariant Zariski-open subset.  
If $(\eps,\mu)\in V^{\mathrm{surf}}(\R)\cap\mathcal P$, then the Fresnel surface has a smooth real point.  
Indeed, let $M=\diag(\eps,\mu)$, let $u=(1,0,0)$, and put
\[
    A := M^{-1/2}C_uM^{-1/2}.
\]
The matrix $A$ is real symmetric and has inertia tuple $(2,2,2)$.  
Since
\[
    P_{\eps,\mu}(tu) = \det(M)\det(tA-I),
\]
its four roots are the reciprocals of the four nonzero eigenvalues of $A$, and are therefore real.  
Since $\mathfrak d \neq 0$, these roots are distinct.  
At each such root the derivative in the $p_1$-direction is nonzero, so the corresponding point of the full Fresnel surface is smooth.

Since $V^{\mathrm{surf}}$ is $G$-invariant, its image
\[
    \mathcal X^{\mathrm{surf}} := \pi_G(V^{\mathrm{surf}})\subset\mathcal X^\circ
\]
is a nonempty Zariski-open subset.  
Lemma~\ref{lem:real-surface-determines-quartic} therefore gives the following consequence.

\begin{corollary}
    \label{cor:surface-equality-implies-coefficient-equality}
    Suppose $(\eps,\mu)\in\mathcal P$ has quotient class lying in $\mathcal X^{\mathrm{surf}}(\R)$, and let $(\eps',\mu')\in\mathcal P$.  
    If $\Sigma_{\eps,\mu}=\Sigma_{\eps',\mu'}$, then $\widetilde F(\eps,\mu)=\widetilde F(\eps',\mu')$ in $\PP(W)$.
    \qed
\end{corollary}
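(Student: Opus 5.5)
The plan is to apply Lemma~\ref{lem:real-surface-determines-quartic} with $F=\widetilde P_{\eps,\mu}$ and $G=\widetilde P_{\eps',\mu'}$, working in the affine chart $p_0=1$, where real points of $\V(\widetilde P)$ correspond to points of $\Sigma$.

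First, I will verify the hypotheses on $F$. Because the quotient class of $(\eps,\mu)$ lies in $\mathcal X^{\mathrm{surf}}$ and $V^{\mathrm{surf}}$ is $G$-invariant and saturated, we have $(\eps,\mu)\in V^{\mathrm{surf}}(\R)\cap\mathcal P$. Since $\pi_G$ is a geometric quotient, membership in a $G$-invariant open set depends only on the orbit. Hence $\widetilde P_{\eps,\mu}$ is geometrically integral, i.e.\ irreducible over $\C$. By the discussion preceding the corollary, $\Sigma_{\eps,\mu}$ has a smooth real point $p^\ast$ on the $p_1$-axis. Near $p^\ast$ the gradient of $P_{\eps,\mu}$ is nonzero, so by the implicit function theorem there is a nonempty Euclidean-open subset $U$ of the smooth real locus of $\Sigma_{\eps,\mu}$. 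Homogenizing with $p_0=1$ gives a corresponding open set of smooth real points of $\V(\widetilde P_{\eps,\mu})$. Smoothness at points with $p_0\neq0$ is the same in the affine and projective settings, by the Euler relation.

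Next, since $\Sigma_{\eps',\mu'}=\Sigma_{\eps,\mu}\supset U$, the polynomial $P_{\eps',\mu'}$ vanishes on $U$, so $\widetilde P_{\eps',\mu'}$ vanishes on the cone over $U$ at $p_0=1$. Both $\widetilde P$'s are nonzero homogeneous quartics, because their $p_0^4$ coefficient is $\det M\neq0$. Lemma~\ref{lem:real-surface-determines-quartic} then gives $\widetilde P_{\eps',\mu'}=c\,\widetilde P_{\eps,\mu}$ with $c\in\R^\times$. This is exactly the equality $\widetilde F(\eps,\mu)=\widetilde F(\eps',\mu')$ in $\PP(W)$.

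The only delicate point is the bookkeeping between the affine surface $\Sigma$ and the projective hypersurface. In particular, one must confirm that a Euclidean-open piece of the smooth affine real locus lifts to a Euclidean-open subset of the smooth real locus of $\V(\widetilde P_{\eps,\mu})\subset\PP^3(\R)$. It is enough to note that the chart $p_0\neq0$ is an open embedding, which respects both the Euclidean topology and smoothness. Nothing is assumed about $(\eps',\mu')$ beyond $\mathcal P$, since the lemma needs hypotheses only on $F$.
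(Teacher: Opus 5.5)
Your proposal is correct and takes the paper's route: the corollary is an immediate application of Lemma~\ref{lem:real-surface-determines-quartic} with $F=\widetilde P_{\eps,\mu}$. The hypotheses on $F$ come from geometric integrality on $V^{\mathrm{gi}}$ and from the smooth real point on the $p_1$-axis supplied by $\mathfrak d\neq 0$. Your extra bookkeeping makes explicit what the paper leaves implicit behind its \qed: saturation of $V^{\mathrm{surf}}$ under the geometric quotient, and passing openness and smoothness through the chart $p_0\neq0$.
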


\newpage

\subsection{Generic Reconstruction from the Real Fresnel surface}
\label{sec:generic_reconstruction_proof}

\begin{proof}[Proof of Theorem~\ref{thm:generic-uniqueness}]
    By Theorem~\ref{thm:Fresnel_polynomial_map_birational}, there exists a nonempty Zariski-open subset $\mathcal Y_0\subset\mathcal Y$ such that $\phi^{-1}(\mathcal Y_0) \xrightarrow{\ \sim\ }\mathcal Y_0$ is an isomorphism. 
    Set
    \begin{equation}
        \label{eq:final-good-open-quotient}
        \Omega := \mathcal X^{\mathrm{surf}} \cap \phi^{-1}(\mathcal Y_0) \subset\mathcal X^\circ.
    \end{equation}
    Both factors are nonempty Zariski-open subsets of the irreducible variety $\mathcal X^\circ$, so $\Omega$ is nonempty.  
    Since the image of the positive-definite locus is Zariski dense by Lemma~\ref{lem:physical-orbits-in-quotient}, $\Omega(\R)$ contains physical gauge classes.

    Suppose that $(\eps,\mu)\in\mathcal P$ has quotient class in $\Omega(\R)$, and suppose that $(\eps',\mu')\in\mathcal P$ satisfies $\Sigma_{\eps,\mu}=\Sigma_{\eps',\mu'}$. 
    By Corollary~\ref{cor:surface-equality-implies-coefficient-equality} we know that $\widetilde F(\eps,\mu) = \widetilde F(\eps',\mu')$.
    Equivalently,
    \[
            \phi\bigl(\pi_G(\eps,\mu)\bigr) = \phi\bigl(\pi_G(\eps',\mu')\bigr).
    \]
    The common image lies in $\mathcal Y_0$, and since $\phi^{-1}(\mathcal Y_0)\to\mathcal Y_0$ is an isomorphism, we deduce that $\pi_G(\eps,\mu)=\pi_G(\eps',\mu')$.
    By Lemma~\ref{lem:physical-orbits-in-quotient}, there is a $t>0$ such that either $(\eps',\mu')=(t\eps,t^{-1}\mu)$ or $(\eps',\mu')=(t\mu,t^{-1}\eps)$. 
    This is the gauge freedom~\eqref{eq:gauge}.

    Finally, the inverse image $\pi_G^{-1}(\Omega)\subset V^\circ\subset V$ is a nonempty $G$-invariant Zariski-open subset.  
    Its complement is a proper real algebraic subset of $V$.  This gives the exceptional algebraic locus in the formulation of Theorem~\ref{thm:generic-uniqueness} stated in the introduction.
\end{proof}

\printbibliography

@unpublished{tonyjoonas2023,
    author = {Maarten V. de Hoop and Joonas Ilmavirta and Matti Lassas and Anthony V{\'a}rilly-Alvarado},
    title = {{Reconstruction of generic anisotropic stiffness tensors from partial data around one polarization}},
    month = jul,
    year = {2023},
	url={http://users.jyu.fi/~jojapeil/pub/slowness-ag-1.pdf},
	arxiv = {2307.03312}
}

@unpublished{this-paper,
    author = {Antonio Cocan and Maarten V. de Hoop and Joonas Ilmavirta and Matti Lassas and Anthony V{\'a}rilly-Alvarado},
    title = {{Generic Recovery of Permittivity and Permeability in Anisotropic Maxwell Systems}},
    month = aug,
    year = {2026},
    note = {Preprint, arXiv (this article). See supplementary files on arXiv for magma code.}
}

@book {vakil2025,
    AUTHOR = {Vakil, Ravi},
     TITLE = {The rising sea---foundations of algebraic geometry},
 PUBLISHER = {Princeton University Press, Princeton, NJ},
      YEAR = {[2025] \copyright 2025},
     PAGES = {xxiii+662},
      ISBN = {978-0-691-26866-8; 978-0-691-26867-5; 978-0-691-26868-2},
   MRCLASS = {14-01},
  MRNUMBER = {4942570},
}

@incollection{calderon1980inverse,
  author    = {Calder{\'o}n, Alberto P.},
  title     = {On an inverse boundary value problem},
  booktitle = {Seminar on Numerical Analysis and its Applications to Continuum Physics},
  pages     = {65--73},
  publisher = {Sociedade Brasileira de Matem{\'a}tica},
  address   = {Rio de Janeiro},
  year      = {1980}
}

@article{sylvesteruhlmann1987global,
  author  = {Sylvester, John and Uhlmann, Gunther},
  title   = {A global uniqueness theorem for an inverse boundary value problem},
  journal = {Annals of Mathematics},
  volume  = {125},
  number  = {1},
  pages   = {153--169},
  year    = {1987},
  doi     = {10.2307/1971291}
}

@article{somersalo1992linearized,
  author  = {Somersalo, Erkki and Isaacson, David and Cheney, Margaret},
  title   = {A linearized inverse boundary value problem for {Maxwell}'s equations},
  journal = {Journal of Computational and Applied Mathematics},
  volume  = {42},
  pages   = {123--136},
  year    = {1992}
}

@article{sunuhlmann1992maxwell,
  author  = {Sun, Ziqi and Uhlmann, Gunther},
  title   = {An inverse boundary value problem for {Maxwell}'s equations},
  journal = {Archive for Rational Mechanics and Analysis},
  volume  = {119},
  pages   = {71--93},
  year    = {1992},
  doi     = {10.1007/BF00376011}
}

@article{ola1993electrodynamics,
  author  = {Ola, Petri and P{\"a}iv{\"a}rinta, Lassi and Somersalo, Erkki},
  title   = {An inverse boundary value problem in electrodynamics},
  journal = {Duke Mathematical Journal},
  volume  = {70},
  number  = {3},
  pages   = {617--653},
  year    = {1993},
  doi     = {10.1215/S0012-7094-93-07014-7}
}

@article{ola1996sommerfeld,
  author  = {Ola, Petri and Somersalo, Erkki},
  title   = {Electromagnetic inverse problems and generalized {Sommerfeld} potentials},
  journal = {SIAM Journal on Applied Mathematics},
  volume  = {56},
  number  = {4},
  pages   = {1129--1145},
  year    = {1996},
  doi     = {10.1137/S0036139995283948}
}

@article{carozhou2014global,
  author  = {Caro, Pedro and Zhou, Ting},
  title   = {On global uniqueness for an {IBVP} for the time-harmonic {Maxwell} equations},
  journal = {Analysis \& PDE},
  volume  = {7},
  number  = {2},
  pages   = {375--405},
  year    = {2014},
  doi     = {10.2140/apde.2014.7.375}
}

@article{dossantosferreira2009limiting,
  author  = {Dos Santos Ferreira, David and Kenig, Carlos E. and Salo, Mikko and Uhlmann, Gunther},
  title   = {Limiting {Carleman} weights and anisotropic inverse problems},
  journal = {Inventiones Mathematicae},
  volume  = {178},
  number  = {1},
  pages   = {119--171},
  year    = {2009},
  doi     = {10.1007/s00222-009-0196-4}
}

@article{kenig2011anisotropicmaxwell,
  author  = {Kenig, Carlos E. and Salo, Mikko and Uhlmann, Gunther},
  title   = {Inverse problems for the anisotropic {Maxwell} equations},
  journal = {Duke Mathematical Journal},
  volume  = {157},
  number  = {2},
  pages   = {369--419},
  year    = {2011},
  doi     = {10.1215/00127094-1272903}
}

@article{greenleaf2007fullwave,
  author  = {Greenleaf, Allan and Kurylev, Yaroslav and Lassas, Matti and Uhlmann, Gunther},
  title   = {Full-wave invisibility of active devices at all frequencies},
  journal = {Communications in Mathematical Physics},
  volume  = {275},
  number  = {3},
  pages   = {749--789},
  year    = {2007},
  doi     = {10.1007/s00220-007-0311-7}
}

@article{greenleaf2009invisibility,
  author  = {Greenleaf, Allan and Kurylev, Yaroslav and Lassas, Matti and Uhlmann, Gunther},
  title   = {Invisibility and inverse problems},
  journal = {Bulletin of the American Mathematical Society},
  volume  = {46},
  number  = {1},
  pages   = {55--97},
  year    = {2009},
  doi     = {10.1090/S0273-0979-08-01232-9}
}

@book{hudson1905kummer,
author = {Hudson, Ronald W. H. T.},
title = {Kummer's Quartic Surface},
publisher = {Cambridge University Press},
year = {1905}
}

@article{bateman1910kummer,
author = {Bateman, Harry},
title = {Kummer's Quartic Surface as a Wave Surface},
journal = {Proceedings of the London Mathematical Society},
volume = {8},
number = {1},
pages = {375--382},
year = {1910}
}

@book {hehlobukhov2003,
    AUTHOR = {Hehl, Friedrich W. and Obukhov, Yuri N.},
     TITLE = {Foundations of classical electrodynamics},
    SERIES = {Progress in Mathematical Physics},
    VOLUME = {33},
      NOTE = {Charge, flux, and metric},
 PUBLISHER = {Birkh\"auser Boston, Inc. Boston MA},
      YEAR = {2003},
     PAGES = {xvi+410},
      ISBN = {0-8176-4222-6},
   MRCLASS = {78-01 (53C80 78A25)},
  MRNUMBER = {1999743},
MRREVIEWER = {A.\ P.\ Stone},
       DOI = {10.1007/978-1-4612-0051-2},
       URL = {https://doi-org.ezproxy.rice.edu/10.1007/978-1-4612-0051-2},
}

@article{lindellfavarobergamin2011,
author = {Favaro, Alberto and Bergamin, Lucio and Lindell, Ismo V.},
title = {The Optical Geometry of Wave Propagation in Linear Media},
journal = {International Journal of Geometric Methods in Modern Physics},
volume = {8},
number = {7},
pages = {1495--1503},
year = {2011}
}

@article{bergaminfavaro2014,
author = {Favaro, Alberto and Bergamin, Lucio},
title = {The Non-birefringent Limit of All Linear, Skewonless Media and Kummer Surfaces},
journal = {Annalen der Physik},
volume = {523},
number = {5},
pages = {383--401},
year = {2011}
}

@article{favaro2011,
author = {Favaro, Alberto},
title = {Recent Advances in the Description of Electromagnetic Media Using Fresnel Surfaces},
journal = {Acta Physica Polonica A},
volume = {124},
number = {2},
pages = {240--244},
year = {2013}
}

@article{FavaroHehl2016,
  title = {Light propagation in local and linear media: Fresnel-Kummer wave surfaces with 16 singular points},
  author = {Favaro, Alberto and Hehl, Friedrich W.},
  journal = {Phys. Rev. A},
  volume = {93},
  issue = {1},
  pages = {013844},
  numpages = {6},
  year = {2016},
  month = {Jan},
  publisher = {American Physical Society},
  doi = {10.1103/PhysRevA.93.013844},
  url = {https://link.aps.org/doi/10.1103/PhysRevA.93.013844}
}

@article{favaro2016,
author = {Favaro, Alberto and Hehl, Friedrich W.},
title = {Light Propagation in Local and Linear Media: Fresnel-Kummer Wave Surfaces with 16 Singular Points},
journal = {Physical Review A},
volume = {93},
number = {1},
pages = {013844},
year = {2016}
}

@article{Baekler2014,
  author  = {Peter Baekler and Alberto Favaro and Yakov Itin and Friedrich W. Hehl},
  title   = {The Kummer tensor density in electrodynamics and in gravity},
  journal = {Annals of Physics},
  volume  = {349},
  pages   = {297--324},
  year    = {2014},
  month   = oct,
  doi     = {10.1016/j.aop.2014.06.007},
  issn    = {0003-4916},
  eprint  = {1403.3467},
  archivePrefix = {arXiv},
  primaryClass  = {gr-qc}
}

@article {Dolgachev,
    AUTHOR = {Dolgachev, I.},
     TITLE = {Kummer surfaces: 200 years of study},
   JOURNAL = {Notices Amer. Math. Soc.},
  FJOURNAL = {Notices of the American Mathematical Society},
    VOLUME = {67},
      YEAR = {2020},
    NUMBER = {10},
     PAGES = {1527--1533},
      ISSN = {0002-9920,1088-9477},
   MRCLASS = {14J28 (14-02 14-03)},
  MRNUMBER = {4201885},
       DOI = {10.1090/noti},
       URL = {https://doi-org.ezproxy.rice.edu/10.1090/noti},
}

@book {Hartshorne,
    AUTHOR = {Hartshorne, Robin},
     TITLE = {Algebraic geometry},
    SERIES = {Graduate Texts in Mathematics},
    VOLUME = {No. 52},
 PUBLISHER = {Springer-Verlag, New York-Heidelberg},
      YEAR = {1977},
     PAGES = {xvi+496},
      ISBN = {0-387-90244-9},
   MRCLASS = {14-01},
  MRNUMBER = {463157},
MRREVIEWER = {Robert\ Speiser},
}

@book {Mukai,
    AUTHOR = {Mukai, Shigeru},
     TITLE = {An introduction to invariants and moduli},
    SERIES = {Cambridge Studies in Advanced Mathematics},
    VOLUME = {81},
   EDITION = {Japanese},
 PUBLISHER = {Cambridge University Press, Cambridge},
      YEAR = {2003},
     PAGES = {xx+503},
      ISBN = {0-521-80906-1},
   MRCLASS = {14-02 (14D20 14L24 14L30)},
  MRNUMBER = {2004218},
MRREVIEWER = {Arvid\ Siqveland},
}

@incollection {magma,
    AUTHOR = {Bosma, Wieb and Cannon, John and Playoust, Catherine},
     TITLE = {The {M}agma algebra system. {I}. {T}he user language},
      NOTE = {Computational algebra and number theory (London, 1993)},
   JOURNAL = {J. Symbolic Comput.},
  FJOURNAL = {Journal of Symbolic Computation},
    VOLUME = {24},
      YEAR = {1997},
    NUMBER = {3-4},
     PAGES = {235--265},
      ISSN = {0747-7171,1095-855X},
   MRCLASS = {68Q40},
  MRNUMBER = {1484478},
       DOI = {10.1006/jsco.1996.0125},
       URL = {https://doi-org.ezproxy.rice.edu/10.1006/jsco.1996.0125},
}

@article{Dencker,
  title={On the propagation of polarization sets for systems of real principal type},
  author={Dencker, Nils},
  journal={Journal of Functional Analysis},
  volume={46},
  number={3},
  pages={351--372},
  year={1982},
  publisher={Elsevier}
}

\end{document}